\setlist[enumerate,1]{label=(\roman*)}
\pgfplotsset{compat=1.18}
\definecolor{ForestGreen}{HTML}{005F50} 
\theoremstyle{plain}
	\newtheorem{theorem}             {Theorem}[section]
	\newtheorem{corollary}  [theorem]{Corollary}
	\newtheorem{proposition}[theorem]{Proposition}
	\newtheorem{lemma}      [theorem]{Lemma}
\theoremstyle{definition}
	\newtheorem{definition} [theorem]{Definition}
\theoremstyle{remark}
	\newtheorem{remark}     [theorem]{Remark}
\numberwithin{equation}{section}   
\DeclareMathOperator{\E}{{\mathds E}}
\DeclareMathOperator{\VaR}{{\mathsf{V@R}}}
\DeclareMathOperator*{\essinf}{ess\,inf}    
\DeclareMathOperator*{\esssup}{ess\,sup}    
\newcommand{\one}{{\mathds 1}} 		
\title{Higher-Order Stochastic Dominance Constraints in Optimization}
\author{%
  Rajmadan Lakshmanan%
		\thanks{Faculty of Mathematics, University of Technology, Chemnitz, Germany}\,
		\footnote{\orcidlink{0009-0006-3273-9063} \href{https://orcid.org/0009-0006-3273-9063}{https://orcid.org/0009-0006-3273-9063}. Contact: \protect\href{rajmadan.lakshmanan@math.tu-chemnitz.de}{rajmadan.lakshmanan@math.tu-chemnitz.de}}
	\and
	Alois Pichler%
		\footnotemark[1]\,
		\thanks{\href{https://orcid.org/0000-0001-8876-2429}{\orcidlink{0000-0001-8876-2429} https://orcid.org/0000-0001-8876-2429}	
		}
    \and
    Milo\v{s} Kopa%
    \thanks{Charles University, Prague, Czech Republic}\,
}
\begin{document}
	\maketitle
	\begin{abstract}
    This contribution examines optimization problems that involve stochastic dominance constraints. These problems have uncountably many constraints.
    We develop methods to solve the optimization problem by reducing the constraints to a finite set of test points needed to verify stochastic dominance.
    This improves both theoretical understanding and computational efficiency.
    Our approach introduces two formulations of stochastic dominance—one employs expectation operators and another based on risk measures—allowing for efficient verification processes.
    Additionally, we develop an optimization framework incorporating these stochastic dominance constraints.
    Numerical results validate the robustness of our method, showcasing its effectiveness for solving higher-order stochastic dominance problems, with applications to fields such as portfolio optimization.
	\end{abstract}

\section{Introduction}
Stochastic dominance is an important perspective in decision-making under uncertainty and quantitative finance.
It provides a strong method for comparing random variables via their distribution functions. 
The concept facilitates a structured framework for evaluating the superiority of one investment, policy, or strategy over another under conditions of uncertainty.
By employing cumulative distribution functions, stochastic dominance enables decision-makers to manage without presupposing specific utility functions, thus providing a more generalized and mathematically rigorous approach to optimization and risk management.
The precision and depth of stochastic dominance make it an indispensable tool in the analysis of complex probabilistic systems.

Our research introduces a novel method for verifying stochastic dominance, enabling analysis across all orders, including non-integral and higher orders, beyond the second order of stochastic dominance.
The approach reduces the infinitely many, non-linear constraints of the genuine formulation to finitely many, which results in a computationally efficient framework.
This reformulation simplifies the verification process and provides a rigorous mathematical solution to the challenges of establishing stochastic dominance for arbitrary order. 
We extend this framework to a risk functional formulation of stochastic dominance.
Numerical examples demonstrate its effectiveness in real-world applications, particularly in portfolio optimization.
Using the MSCI index as a benchmark, we conduct experiments that showcase the reliability and robustness of this method in the portfolio optimization framework.

\subsection{Literature and related works}
\citet{hadar1969rules} pioneered the foundational theories for \emph{First}-Order Stochastic Dominance~(FSD), which serve as a key benchmark in decision-making under uncertainty.
\citet{levy1992stochastic} expanded this field by developing the core principles of Second-Order Stochastic Dominance~(SSD), offering deeper insights into risk and utility analysis.
Building on these seminal works, \citet{RuszOgryczak} build a quantile-based framework for mean-risk models that aligns with second-degree stochastic dominance and develop formulations using stochastic linear programming.
Furthermore, \citet{bellini2007coherent} analyze the consistency between risk measures and higher-order stochastic dominance. 
Their contributions have profoundly enriched both theoretical and practical aspects of decision-making processes.
For a recent textbook overview, we refer to \citet{dentcheva2024risk}.

\citet{dentcheva2003optimization} and \citet{rudolf2008optimization} introduce a robust framework for solving optimization problems with stochastic dominance constraints of \emph{second} order, marking a crucial advancement in the field.
On the computational front, \citet{fabian2011processing} utilize stochastic approximation methods to enhance the computational efficiency of solving SSD~optimization problems.
These contributions have been critical in improving the practicality and performance of optimization methodologies for complex problems.

Empirical studies by \citet{roman2013enhanced} demonstrate that incorporating SSD constraints in portfolio optimization leads to portfolios with superior risk-adjusted returns.
For empirical (equiprobable) probabilities, \citet{kuosmanen2004efficient} shows that portfolios constrained by SSD outperform traditional mean-variance optimized portfolios.
This approach builds on the majorization theorem from \citet{hardy1952inequalities} and extends further in \citet{luedtke2008new} to cover a general probability measure setting.
Building on these foundational works, \citet{ThirdKopa} explore how \emph{third}-order stochastic dominance applies to portfolio optimization.
They demonstrate that using this dominance constraint improves portfolio performance.
Recently, \citet{norkin2024portfolio} examine portfolio optimization under first order stochastic dominance and incorporate exact penalty functions to enhance the computational approach. 

\citet{kopa2023multistage} extend this concept to multistage stochastic dominance, with a particular focus on pension fund management, highlighting the importance of considering the temporal aspects of risk.
\citet{maggioni2016bounds, maggioni2019guaranteed} contribute to SSD by developing robust bounds and approximations for multistage stochastic programs.
They are crucial for ensuring the reliability of solutions in non-discrete, risk-averse optimization contexts.
Moreover, \citet{fabian2008handling} and \citet{dentcheva2012two} address two-stage stochastic optimization problems with ordering constraints on recourse decisions, addressing issues related to feasibility and solution quality in complex decision environments.
Additionally, \citet{consigli2020long,consigli2023asset} examine financial planning and asset-liability management under stochastic dominance, offering practical insights into balancing risk across different stages of investment decision-making.


Higher-order stochastic dominance has emerged as a critical tool in optimization and portfolio choice, offering nuanced insights into risk preferences and decision-making under uncertainty.
Research by \cite{dentcheva2013stability} highlights the role of probability metrics in quantifying changes in optimal values and solution sets in higher-order. 
\citet{post2013general} develop linear formulations of higher-order stochastic dominance using piecewise polynomial utility representations.
This approach allows an efficient comparison of prospects and shows that traditional mean-variance methods often fail to identify inefficiencies in market portfolios.
Studies by \citet{fang2017higher} on optimality and efficiency across various degrees of stochastic dominance provide tractable systems of inequalities that capture complex investor preferences.
It reveals the influence of higher-order risk attitudes on investment decisions.
Additionally, investigations by \citet{fang2022optimal} into fourth-degree dominance show that prudent and temperate investors benefit from portfolios emphasizing skewness and minimizing kurtosis.
It results in superior performance through industry-level momentum effects.
These works collectively underscore the value of higher-order dominance in advancing robust, preference-aligned decision frameworks.


\paragraph{Outline and Contributions.}
This paper is organized as follows.
Section~\ref{sec:pre} covers the essential definitions and properties of stochastic dominance.
Section~\ref{sec:main} introduces our main theoretical results by analyzing two distinct formulations of stochastic dominance, one based on expectation operators and the other on a risk function.
In this section, we rigorously show how to reduce the test points needed to confirm stochastic dominance from infinitely many to a finite set. 
Our primary results include Theorem~\ref{thm:156} (below), which provides the \emph{finitely} many conditions for verifying stochastic dominance with respect to norms (equivalent to expectation operators), and Theorem~\ref{thm:140} (below), which identifies critical risk levels for verifying stochastic dominance relations using \emph{finitely} many risk levels.
Section~\ref{sec:opti} expands on these findings by introducing an optimization method that includes \emph{non-linear} stochastic dominance constraints and explains a specialized algorithm for its implementation.
Section~\ref{sec:numerics} offers a numerical exposition of our method and demonstrates the effectiveness of Algorithm~\ref{alg:Newton} (below) in addressing higher-order stochastic dominance problems.

\section{Preliminaries\label{sec:pre}}

In the broader sense, this paper examines the optimization problem
\begin{align}
	\text{maximize } & \E Y\label{eq:1} \\
	\text{subject to } & X \preccurlyeq Y\text{ for every }  X \in \mathcal{X},\label{eq:2}
\end{align}
where “$\preccurlyeq$” denotes a partial order between random variables $X$ and~$Y\in\mathcal X$, and $\mathcal{X}$ represents a set of benchmark random variables. More specifically, we consider general stochastic dominance relations of order \( p \geq 1 \) (including \( p  = 1 \) and \( p = 0 \)), denoted by $\preccurlyeq^{(p)}$.

From a portfolio optimization perspective, the feasible random variable~$Y$ in~\eqref{eq:1}  dominates all \emph{benchmark} variables~$X$ and often, the set $\mathcal X$ consists of a single random variable~$X$ so that~\eqref{eq:2} consists in finding a random variable with $X\preccurlyeq Y$, which optimizes the objective~\eqref{eq:1}.
\begin{definition}[Stochastic dominance]\label{def:150}
	Let $\|\cdot\|$ be a norm on a set of random variables.
  It is said that $X$ is stochastically dominated by $Y$, denoted
  \[  X\preccurlyeq^{\|\cdot\|} Y,\] if
  \begin{equation}\label{eq:33}
    \|(X-t)_+\|\ge \|(Y-t)_+\| \text{~ for all }t\in \mathbb R,
  \end{equation}
  where $x_+\coloneqq \max(0,x)$.
\end{definition}
\begin{remark}\label{rem:164}
	The function $t\mapsto \|(t-X)_+\|$ in~\eqref{eq:33} is non-decreasing and convex; indeed, it holds that
  \begin{align}
    (t_\lambda-X)_+&= \bigl((1-\lambda)(t_0-X)+\lambda(t_1-X)\bigr)_+\\
           &\le (1-\lambda)(t_0-X)_+ + \lambda(t_1-X)_+,\label{eq:35}
  \end{align}
  where $t_\lambda\coloneqq (1-\lambda)t_0+\lambda\,t_1$. By the triangle inequality for the norm it follows from~\eqref{eq:35} that
  \begin{align}\label{eq:34}
    \|(X-t_\lambda)_+\|\le (1-\lambda)\|(X-t_0)_+\|+ \lambda\|(X-t_1)_+\|,
  \end{align}
  from which convexity is immediate.
  In what follows, we shall assume that the derivative of $ t\mapsto \|(X-t)_+\|$ exists and is continuous.
\end{remark}
\begin{remark}[Partial order]
	The stochastic dominance relation satisfies
	\begin{enumerate}[noitemsep]
		\item $X\preccurlyeq^{\|\cdot\|} X$ (reflexivity) and
		\item if $X\preccurlyeq^{\|\cdot\|} Y$ and $Y\preccurlyeq^{\|\cdot\|} Z$, then $X\preccurlyeq^{\|\cdot\|} Z$ (transitivity).
	\end{enumerate}
	However, $X\preccurlyeq^{\|\cdot\|} Y$ and $Y\preccurlyeq^{\|\cdot\|} X$ do \emph{not} imply $X=Y$ (i.e., the relation $\preccurlyeq^{\|\cdot\|}$ is not antisymmetric), and consequently, $\preccurlyeq^{\|\cdot\|}$ is not a partial order on the appropriate set of random variables. However, more specifically, the order is defined on the corresponding space of distributions.
\end{remark}
\begin{definition}
  Let $p\in [1,\infty)$ and $X$, $Y\in L^p$ be random variables. It is said that $X$ is dominated by $Y$ in $p$th stochastic order, denoted
  \[  X\preccurlyeq^{(p)} Y, \] if
  \begin{equation}\label{eq:3}
      \E (t-X)_+^{p-1} \ge \E (t-Y)_+^{p-1}\quad\text{for all } t\in \mathbb R,
    \end{equation}
    and
    \begin{equation}\label{eq:40}
      X\preccurlyeq^{(\infty)} Y,\quad \text{ if }\quad  \essinf X\le \essinf Y.
    \end{equation}
  \end{definition}
\begin{remark}[Norm formulation and infinity order]
  \label{rem:NormInfy}
  The condition~\eqref{eq:3} is equivalent to
  \begin{equation}\label{eq:36}
    \|(t-X)_+\|_{p-1} \ge \|(t-Y)_+\|_{p-1},\qquad t\in \mathbb R,
  \end{equation}
  where $\|X\|_p\coloneqq (\E |X|^p)^{\nicefrac1{p}}$ and $\|X\|_\infty\coloneqq \esssup |X|$ (for $p=\infty$).
  For $p= \infty$, the norm in~\eqref{eq:36} is
  \[  \|(t-X)_+\|_\infty= \begin{cases}
   0 &	\text{if } t \le X\text{ a.s.},	\\
  \esssup(t-X)   & \text{else}
  \end{cases}= \begin{cases}
		0 &	\text{if }t\le X\text{ a.s.},	\\
				t-\essinf X   & \text{else},
  \end{cases}\]
  so that the relation $X\preccurlyeq^{(\infty)}Y$ with~\eqref{eq:33} translates into
  \begin{equation}\label{eq:31}
    \essinf X \le \essinf Y,
  \end{equation} which is the defining equation~\eqref{eq:40}.
\end{remark}

  The stochastic dominance of order $p$ and stochastic dominance with respect to the norm $\|\cdot\|_p$ are related by \[  \preccurlyeq^{(p+1)} ~~~ \iff ~~~ \preccurlyeq^{\|\cdot\|_p}.\] The reason for the apparent, unfortunate mismatch between~$p$ and $p+1$ is historic.
\begin{remark}\label{rem:177}
	The stochastic dominance relation $\preccurlyeq^{(\infty)}$ is occasionally defined differently in the literature (cf.\ \citet{SDInftyWhitmore}), but the definition in~\eqref{eq:40} is consistent with Lebesgue norms in~\eqref{eq:36}.
\end{remark}

\begin{remark}[Cf.\,\citep{Ogryczak99,Ogryczak01}]
  Among the many formulations of higher-order stochastic dominance, the ones based on distribution functions are well-known.
  Let \( F_X^{(1)}(\cdot) := F_X(\cdot) \). For \( k = 2, 3, \dots \), the \( k \)-th repeated integral is defined as \( F_X^{(k)}(t) := \int_{-\infty}^t F_X^{(k-1)}(y) \, \mathrm{d}y \).
  The following three statements are equivalent, and they characterize stochastic dominance of integer orders (\( k = 1, 2, \dots \)):
  \begin{enumerate}
      \item \( X \preceq^{(k)} Y \),
      \item \( F_X^{(k)}(t) \geq F_Y^{(k)}(t) \) for all \( t \in \mathbb{R} \),
      \item \( \|(t-X)_+\|_{k-1} \geq \|(t-Y)_+\|_{k-1}\) for all \( t \in \mathbb{R} \).
  \end{enumerate}

  Additionally, we notice that \( X \preceq^{(k)} Y \) implies \( X \preceq^{(k+1)} Y \) for all natural numbers \( k = 1, 2, \dots\,\,\).
  For real numbers \( 1 \leq p \leq p' \), \( X \preceq^{(p)} Y \) implies \( X \preceq^{(p')} Y \).
  As the order increases, both for integer and non-integer orders, the stochastic dominance conditions become less restrictive, offering more flexibility in comparing random variables.
\end{remark}

\section{Characterization of stochastic dominance\label{sec:main}}
This section presents our main results.
We examine two stochastic dominance formulations~-- one using expectation operators and the other based on a risk function.
We rigorously demonstrate a method to reduce the required test points for confirming stochastic dominance from an infinite number to a finite set.
\subsection{Reduction from uncountable to finitely many constraints}
The stochastic order $\preccurlyeq^{\|\cdot\|}$ ($\preccurlyeq^{(p)}$, resp.)\@ is, in practice, a tight requirement. That is, it is often difficult to find random variables~$Y$ such that $X^{\|\cdot\|}$ ($X\preccurlyeq^{(p)}Y$, resp.).
The condition~\eqref{eq:33} requires testing the condition for all $t\in \mathbb R$.
The following theorem reveals that the number of test points may be reduced significantly.
\begin{figure}[!htb]
  \centering
  \resizebox{0.42\textwidth}{0.41\textwidth}{
\begin{tikzpicture}[scale=0.23] 
  \draw[->] (-15,0) -- (15,0) node[right] {$t$}; 
  \draw[->] (0,-2) -- (0,40.); 
  
  \foreach \x/\name in { -9/$t_1$, 3/$t_2$, 8/$t_3$ } {
    \node[below] at (\x,0) {\name};
  }

\draw[thick, smooth, blue] plot coordinates {
  (-15, 3.5)
  (-14, 3.7)
  (-13, 4.0)
  (-12, 4.4)
  (-11, 5.0)
  (-9, 6.4)
  (-7, 8.)
  (-4, 10.5)
  (-2, 12.5)
  (0, 14.8)
  (2, 17.5)
  (4, 20.6)
  (6, 24.1)
  (8, 28.5)
  (10, 35.5)
  (10.5, 37.3)
  (11, 39.)  
};
\draw[thick, smooth, red] plot coordinates {
  (-15, 3.1)
  (-14, 3.4)
  (-13, 3.7)
  (-12, 4.2)
  (-11, 4.9)
  (-9, 6.4)    
  (-7, 7.9)
  (-4, 10.3)
  (-2, 12.0)
  (0, 13.8)
  (2, 16.3)
  (4, 19.3)
  (6, 22.9)
  (8, 28.1)    
  (10, 33.5)
  (10.5, 34.8)
  (11.2, 36.5)  
};
\draw[thin, smooth, red, dashed] plot coordinates {
  (-15, 2.5)     
  (-14, 2.7)
  (-13, 3.0)
  (-12, 3.4)
  (-11, 3.9)
  (-9, 5.0)      
  (-7, 6.2)
  (-4, 8.1)
  (-2, 9.6)     
  (0, 11.)
  (2, 13.1)
  (3, 14.8)
  (4, 16.5)
  (6, 19.8)
  (8, 23.4)
  (10, 27.1)
  (12, 30.6)   
};

\draw[thin, smooth, blue, dashed] plot coordinates {
  (-15, 2.2)   
  (-14, 2.5)
  (-13, 2.8)
  (-12, 3.2)
  (-11, 3.7)
  (-9, 5.0)      
  (-7, 6.3)
  (-4, 8.5)
  (-2, 9.9)
  (0, 11.8)
  (2, 13.8)
  (3, 14.8)
  (4, 15.9)
  (6, 19.2)
  (7, 21.3)
  (8, 23.4)
  (10, 28.8)
  (12, 33.9)  
};
   \fill[red!20, opacity=0.3] (-15.,0) rectangle (-9,39.); 
   \fill[blue!20, opacity=0.3] (-9,0) rectangle (2.7,39.); 
   \fill[red!20, opacity=0.3] (2.7,0) rectangle (8,39.); 
   \fill[blue!20, opacity=0.3] (8,0) rectangle (12,39.); 
\end{tikzpicture}}
\begin{tikzpicture}[scale=0.1]
\node[align=center] at (0,-3.5) {
  \begin{tabular}{cccccccc}
    \begin{tikzpicture}
      \draw[blue, thick] (0,0) -- (0.5,0);
    \end{tikzpicture} & $\|(t - X)_+\|$ &
    \begin{tikzpicture}
      \draw[red, thick] (0,0) -- (0.5,0);
    \end{tikzpicture} & $\|(t - Y)_+\|$ 
    \begin{tikzpicture}
      \draw[blue, thin, dashed] (0,0) -- (0.5,0);
    \end{tikzpicture} & $\frac{d}{dt} \|(t - X)_+\|$ &
    \begin{tikzpicture}
      \draw[red, thin, dashed] (0,0) -- (0.5,0);
    \end{tikzpicture} & $\frac{d}{dt} \|(t - Y)_+\|$
  \end{tabular}
};
\end{tikzpicture}
\caption{Illustration of the stochastic dominance verification condition from Theorem \ref{thm:156}. 
The solid curves \( \|(t - X)_+\| \) (blue) and \( \|(t - Y)_+\| \) (red) represent the norms under comparison, while the dashed curves show their respective derivatives, \( \frac{d}{dt} \|(t - X)_+\| \) (blue, dashed) and \( \frac{d}{dt} \|(t - Y)_+\| \) (red, dashed).
The critical points \( t_1 \), \( t_2 \), and \( t_3 \) are identified at intersections of the derivatives, and they provide sufficient confirmation of the dominance relationship $ X \preccurlyeq^{\|\cdot\|} Y$.
}  
\label{fig:stochastic_dominance_extended}
\end{figure}
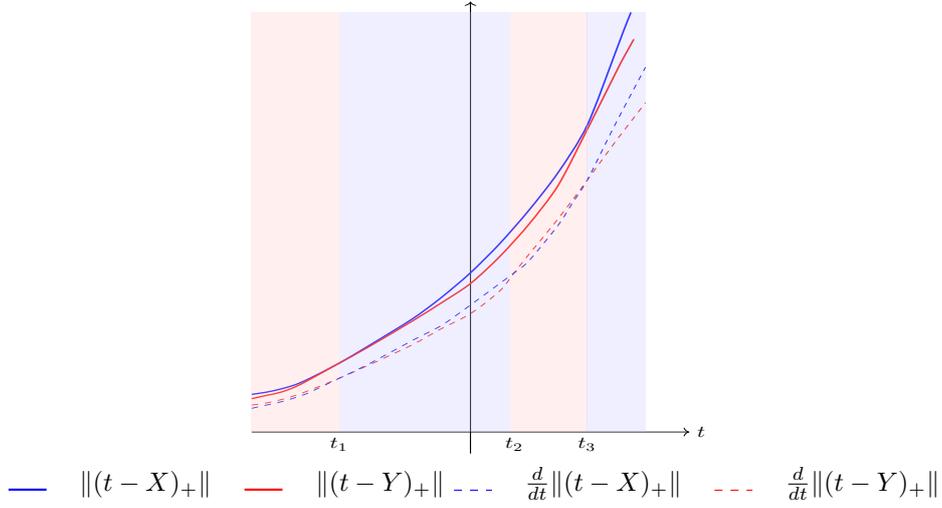
\begin{theorem}\label{thm:156}
	To verify stochastic dominance with respect to the norm $\|\cdot\|$ it is sufficient to verify
	\begin{equation}\label{eq:38}  \|(t-X)_+\|\ge \|(t-Y)_+\| \end{equation}
	for those distincitve $t\in\mathbb R$, for which
  \begin{equation}\label{eq:37}   \partial_t \|(t-X)_+\|~ \cap~ \partial_t\|(t-Y)_+\|\end{equation}
  is not empty.
\end{theorem}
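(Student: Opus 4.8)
The plan is to reduce the assertion to a statement about the single scalar function
$h(t)\coloneqq\|(t-X)_+\|-\|(t-Y)_+\|$,
namely that the finitely many inequalities in~\eqref{eq:38} already force $h\ge 0$ on all of~$\mathbb R$, which is exactly $X\preccurlyeq^{\|\cdot\|}Y$. By Remark~\ref{rem:164} the maps $t\mapsto\|(t-X)_+\|$ and $t\mapsto\|(t-Y)_+\|$ are convex, and by assumption they are continuously differentiable; hence $h\in C^1$, and since the subdifferential of a differentiable convex function is the singleton containing its derivative, the set in~\eqref{eq:37} is nonempty at a point~$t$ exactly when $h'(t)=\partial_t\|(t-X)_+\|-\partial_t\|(t-Y)_+\|=0$. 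Put $Z\coloneqq\{t\in\mathbb R:h'(t)=0\}$; this set is closed, and the hypothesis of the theorem is that $h\ge 0$ on~$Z$.

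Next I would record the behaviour at the two ends of the line. As $t\to-\infty$ one has $(t-X)_+\to 0$ pointwise and, for $t\le 0$, $(t-X)_+\le(-X)_+\le|X|\in L^p$, so $\|(t-X)_+\|\to 0$ and hence $h(t)\to 0$. As $t\to+\infty$, $h$ tends to a finite limit~$\ell$: for the Lebesgue norm $\|\cdot\|_p$ this follows from $\partial_t\|(t-X)_+\|=\E(t-X)_+^{p-1}\big/\|(t-X)_+\|_p^{\,p-1}\to 1$ (and likewise for~$Y$), the difference of the two derivatives being integrable near~$+\infty$. Since the two derivatives then coincide in the limit, the list of test points should be completed by the limiting comparison $\ell\ge 0$ — a mean-type condition, for instance $\E X\le\E Y$ when $p=1$; it is automatically implied by~\eqref{eq:38} whenever $Z$ is unbounded above, and otherwise must be retained explicitly.

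The heart of the proof is then a monotonicity-between-crossings argument. The open set $\mathbb R\setminus Z$ is a countable disjoint union of open intervals; on each such interval $h'$ is continuous and nowhere vanishing, hence of constant sign, so $h$ is strictly monotone on its closure. Suppose, for contradiction, that $h(t_0)<0$ for some~$t_0$; then $t_0\notin Z$, so $t_0\in(a,b)$ for one such component, on which $h$ is strictly monotone. If $h$ increases on $(a,b)$ then $h(t_0)\ge\lim_{t\downarrow a}h(t)$: when $a$ is finite this limit is $h(a)$ with $a\in Z$, contradicting $h\ge 0$ on~$Z$; when $a=-\infty$ it equals~$0$, contradicting $h(t_0)<0$. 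If $h$ decreases on $(a,b)$, the symmetric argument at the right endpoint gives $h(b)\le h(t_0)<0$ with $b\in Z$ when $b$ is finite, and $\ell\le h(t_0)<0$ when $b=+\infty$. Every case is impossible, so $h\ge 0$ on~$\mathbb R$. The one genuinely delicate case is $b=+\infty$, i.e.\ $t_0$ lying to the right of every finite coincidence of the two derivative curves: the finitely many conditions~\eqref{eq:37}--\eqref{eq:38} do not by themselves control the tail, and this is precisely where the limiting comparison at~$+\infty$ enters. The remaining ingredients — closedness of~$Z$, the two limit computations, and the decomposition of $\mathbb R\setminus Z$ into intervals — are routine.
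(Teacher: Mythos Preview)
Your approach is essentially the paper's: both arguments rest on the fact that between consecutive points where the derivative curves $t\mapsto\partial_t\|(t-X)_+\|$ and $t\mapsto\partial_t\|(t-Y)_+\|$ meet, the difference $h(t)=\|(t-X)_+\|-\|(t-Y)_+\|$ is monotone, so the inequality at the meeting points propagates everywhere. The paper does this directly --- for each~$t$ it locates the nearest crossing~$s$ on the appropriate side and integrates the derivative inequality from~$s$ to~$t$ --- while you phrase the same thing as a contradiction argument on the maximal components of $\mathbb R\setminus Z$; the mechanics are identical.

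Where you are more careful than the paper is at the ends of the line. The paper's proof tacitly assumes the nearest crossing~$s$ is always finite (``the infimum is attained, as the functions $g_X'$ and $g_Y'$ are continuous''), i.e.\ that the set defining~$s$ is nonempty. You correctly observe that the left tail is harmless because $h(-\infty)=0$, and that the right tail genuinely requires the extra datum $\ell=\lim_{t\to\infty}h(t)\ge 0$ when $Z$ is bounded above --- without it, a component $(a,+\infty)$ on which $h'<0$ is uncontrolled by the finitely many conditions~\eqref{eq:38}. That is a sound refinement of the statement rather than a different proof; the paper's argument has exactly this lacuna at~$+\infty$ and would need the same patch.
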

\begin{remark}\label{rem:480}
    The functions in the preceding theorem are convex, so they have a non-empty subgradient by Remark~\ref{rem:164}. This subgradient with respect to the variable~$t$ is denoted~$\partial_t$.
\end{remark}

\begin{proof}
  To abbreviate the notation define the functions $g_X(t)\coloneqq \|(t-X)_+\|$ and $g_Y(t)\coloneqq \|(t-Y)_+\|$ with derivatives (multifunctions)
  \begin{equation}\label{eq:44}  g^\prime_X(t)\in \partial_t\,g_X(t) \text{ and } g^\prime_Y(t)\in \partial_t\,g_Y(t).\end{equation}
  By Remark~\ref{rem:164}, the selections $g^\prime_X$ and $g^\prime_Y$ are nonnegative, that is,
  \[  g_X^\prime(t), \ g_Y^\prime(t)\ge 0,\qquad t\in\mathbb R, \] and we assume for the moment that both functions are continuous.

  \begin{enumerate}
    \item
  For $t$ given, assume first that $g^\prime_X(t)> g^\prime_Y(t)$. Define
  \[  s\coloneqq \sup\bigl\{s^\prime<t\colon g^\prime_X(s^\prime)\le g^\prime_Y(s^\prime)\bigr\}\] and observe that $s<t$, as the functions $g^\prime_X$ and $g^\prime_Y$ are continuous, and $g^\prime_X(u)\ge g^\prime_Y(u)$ for $u\in (s,t)$; on the boundary of this interval, it holds, by continuity, that $g^\prime_X(s)= g^\prime_Y(s)$.  We conclude from~\eqref{eq:38} and~\eqref{eq:37} $g_X(s)\ge g_Y(s)$ at~$s$, and thus
  \begin{align}\label{eq:39}
    g_X(t)&= g_X(s)+ \int_s^t g_X^\prime(u)\,\mathrm d u\\
            &\ge  g_Y(s)+ \int_s^t g_Y^\prime(u)\,\mathrm d u\\
            & = g_Y(t)
  \end{align}
  and thus~\eqref{eq:33} at~$t$.

  \item Assume next that $g^\prime_X(t)< g^\prime_Y(t)$. Define
  \[  s\coloneqq \inf\bigl\{s^\prime>t\colon g^\prime_X(s^\prime)\ge g^\prime_Y(s^\prime)\bigr\}\] and observe that $t<s$. The infimum is attained, as the functions $g^\prime_X$ and $g^\prime_Y$ are continuous, and it holds that $g^\prime_X(s)= g^\prime_Y(s)$. We conclude from~\eqref{eq:38} and~\eqref{eq:37} that $g_X(s)\ge g_Y(s)$, and together with the definition of~$s$ that
  \begin{align}
    g_X(t)&= g_X(s)- \int_t^s g_X^\prime(u)\,\mathrm d u\\
            &\ge  g_Y(s)- \int_t^s g_Y^\prime(u)\,\mathrm d u\\
            & = g_Y(t)
  \end{align}
  and thus~\eqref{eq:33} at~$t$.
  \end{enumerate}
  This completes the proof.
\end{proof}

\begin{remark}\label{rem:167}
  The assertion of Theorem~\ref{thm:156} does not apply for $p=1$, as in this case the functions $g^\prime_X$ and $g^\prime_Y$ in~\eqref{eq:44} are \emph{not} continuous, so that there are possible no solutions of the characterizing equation~\eqref{eq:37}.
\end{remark}

\subsection{Characterization with risk measures}
This section provides an additional characterization of the general stochastic dominance relation by involving risk functionals, which is in some sense dual.
To this end we revisit essential definitions of risk measures and their properties, while also presenting optimization problems in terms of risk functionals along with the reduced constraints~\eqref{eq:38}.

\begin{definition}[Risk measure]\label{def:141}
  Let \(\beta \in (0, 1)\) be a risk parameter, \(Y\) a random variable representing potential losses. The \emph{higher order risk measure} at risk level~$\beta$ is
  \begin{equation}\label{eq:4}
    \mathcal{R}_\beta(Y) \coloneqq \inf_{t \in \mathbb{R}} \  t + \frac{1}{1 - \beta} \|(Y - t)_+\|.
  \end{equation}
\end{definition}
The higher order risk measure $\mathcal R_\beta$ is a risk measure, satisfying the following axioms of general risk measures.
\begin{definition}[Cf.\ \citet{Artzner1997, Artzner1999}]\label{def:106}
	The function $\mathcal R\colon \mathcal X\to \mathbb R$ is a risk measure, if
  \begin{enumerate}[nolistsep, noitemsep]
    \item $\mathcal R(\lambda\,X)= \lambda\,\mathcal R(X)$ for all $\lambda\ge 0$,
    \item $\mathcal R(X+c)= c+\mathcal R(X)$ for all $c\in\mathbb R$,
    \item $\mathcal R(X+Y)\le \mathcal R(X)+ \mathcal R(Y)$ for all $X$, $Y\in\mathcal X$, and
    \item\label{enu:1} $\mathcal R(X)\le \mathcal R(Y)$, provided that $X\le Y$ almost everywhere.
  \end{enumerate}
\end{definition}
The relation “$X\le Y$ almost surely” in~\ref{enu:1} is occasionally also stated as “$X\preccurlyeq^{(0)}Y$” and referred to as \emph{stochastic dominance of order~$0$}.

It is demonstrated in \citet{Pichler2024} that the stochastic dominance constraint~\eqref{eq:33} can be expressed equivalently by employing the risk functional $\mathcal R(\cdot)$.
Indeed, the following holds true.
\begin{theorem}\label{thm:3}
  The following are equivalent:
  \begin{enumerate}[nolistsep]
      \item $X\preccurlyeq^{\|\cdot\|}Y$,
      \item \label{enu:5}$\mathcal R_\beta(-X)\ge \mathcal R_\beta(-Y)$ for all $\beta\in (0,1)$.
  \end{enumerate}
\end{theorem}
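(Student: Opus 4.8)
The statement to prove is Theorem~\ref{thm:3}: the equivalence of $X\preccurlyeq^{\|\cdot\|}Y$ with $\mathcal R_\beta(-X)\ge\mathcal R_\beta(-Y)$ for all $\beta\in(0,1)$. The natural strategy is to rewrite the risk measure $\mathcal R_\beta(-X)$ directly in terms of the function $g_X(t)\coloneqq\|(t-X)_+\|$ that already appears throughout the paper, and then read off the equivalence from monotonicity properties of that family of functions. From the definition~\eqref{eq:4}, $\mathcal R_\beta(-X)=\inf_{t\in\mathbb R}\bigl(t+\tfrac1{1-\beta}\|(-X-t)_+\|\bigr)=\inf_{t\in\mathbb R}\bigl(t+\tfrac1{1-\beta}\|(-(X+t))_+\|\bigr)$; substituting $s=-t$ gives $\mathcal R_\beta(-X)=\inf_{s\in\mathbb R}\bigl(-s+\tfrac1{1-\beta}\|(s-X)_+\|\bigr)=-\sup_{s\in\mathbb R}\bigl(s-\tfrac1{1-\beta}g_X(s)\bigr)$. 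Hence the condition in~\ref{enu:5} is equivalent to
\begin{equation*}
  \sup_{s\in\mathbb R}\Bigl(s-\tfrac1{1-\beta}g_X(s)\Bigr)\le \sup_{s\in\mathbb R}\Bigl(s-\tfrac1{1-\beta}g_Y(s)\Bigr)\qquad\text{for all }\beta\in(0,1).
\end{equation*}

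For the easy direction ($\Rightarrow$), assume $X\preccurlyeq^{\|\cdot\|}Y$, i.e.\ $g_X(s)\ge g_Y(s)$ for all $s$ by~\eqref{eq:33}. Since $\tfrac1{1-\beta}>0$, we get $s-\tfrac1{1-\beta}g_X(s)\le s-\tfrac1{1-\beta}g_Y(s)$ pointwise in $s$, and taking suprema preserves the inequality; this yields $\mathcal R_\beta(-X)\ge\mathcal R_\beta(-Y)$ for every $\beta$. The substance is in the converse. For the hard direction ($\Leftarrow$), I would argue by contraposition: suppose $X\preccurlyeq^{\|\cdot\|}Y$ fails, so there is a point $t_0$ with $g_X(t_0)<g_Y(t_0)$; I must produce a single risk level $\beta$ at which the supremum inequality is violated. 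The idea is that as $\beta\uparrow1$, the weight $\tfrac1{1-\beta}$ on $g$ blows up, so the supremum $\sup_s(s-\tfrac1{1-\beta}g(s))$ becomes dominated by the behaviour of $g$ near its ``smallest'' values — concretely, letting $c\coloneqq\tfrac1{1-\beta}$, one has $\sup_s(s-c\,g(s))$, and because $g_X$ and $g_Y$ are nonnegative, nondecreasing, convex with continuous derivative (Remark~\ref{rem:164}), the relevant comparison localizes. A clean way to make this rigorous: fix $t_0$ as above and choose $c$ large enough that the supremum $\sup_s(s-c\,g_X(s))$ is attained (or approached) at some $s^\ast$ close to $\essinf X$ — more precisely, use that $s-c\,g_X(s)\le s-c\,g_Y(s)$ would have to hold after taking suprema, evaluate the right-hand supremum at $s=t_0$ to get a lower bound $t_0-c\,g_Y(t_0)$, and bound the left-hand supremum from above. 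The estimate $\|(s-X)_+\|\ge (s-t_0)\cdot\|\one\|\,$ is too crude; instead I expect one needs the sharper fact that $g_X(s)\ge g_X(t_0)+(s-t_0)\,g_X'(t_0^+)$ for $s\ge t_0$ (convexity) together with $g_X(s)\ge g_X(t_0)$ and a control on large $s$ via $g_X(s)\ge \|X\|$-type growth, ensuring $\sup_s(s-c\,g_X(s))\le t_0-c\,g_X(t_0)+o(1)$ as $c\to\infty$ once suitably normalized. Then since $g_X(t_0)<g_Y(t_0)$, for $c$ large enough $t_0-c\,g_X(t_0)< t_0-c\,g_Y(t_0)$ strictly, contradicting the assumed inequality of suprema at the corresponding $\beta$.

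The main obstacle is precisely this last localization estimate for the converse: showing that for large $c$ the maximizer of $s\mapsto s-c\,g(s)$ stays in a bounded neighborhood and that the maximal value is well-approximated by evaluating at (or near) a prescribed point where the two functions differ. This requires care because $g_X,g_Y$ need not be strictly convex and their derivatives may vanish on an initial half-line (for $s\le\essinf X$, $g_X(s)=0$), so the supremum $\sup_s(s-c\,g_X(s))$ could in principle be $+\infty$ unless $g_X$ grows at least linearly — which it does, since $g_X(s)=\|(s-X)_+\|\ge \|s-X\|-\|(s-X)_-\|$ and more directly $g_X(s)/s\to\|\one\|$ as $s\to\infty$ when the underlying norm contains constants, guaranteeing the sup is finite for $c>\|\one\|^{-1}$. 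I would handle $p=\infty$ (where $g_X$ is piecewise linear) and the reference to \citet{Pichler2024} as a sanity check, but present the self-contained argument via the substitution and convexity above; alternatively, one can phrase the whole converse as the elementary duality statement that a family of nonnegative nondecreasing convex functions is ordered pointwise if and only if their ``$c$-weighted Legendre-type transforms'' $\sup_s(s-c\,g(\cdot))$ are ordered for all $c>0$, which is the cleanest packaging and reduces the theorem to a one-variable convex-analysis lemma.
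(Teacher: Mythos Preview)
Your forward direction is correct. The converse, however, has a genuine gap. Sending $c=\tfrac1{1-\beta}\to\infty$ does \emph{not} localize the optimizer of $s\mapsto s-c\,g_X(s)$ near your chosen point~$t_0$: the first-order condition $c\,g_X'(s^\ast)=1$ forces $g_X'(s^\ast)=1/c\to 0$, so $s^\ast$ is driven toward $\essinf X$, and in the limit you only recover the comparison $\essinf X\le\essinf Y$ (the $p=\infty$ relation), not the pointwise inequality at an arbitrary~$t_0$. There is also a sign slip: from $g_X(t_0)<g_Y(t_0)$ one gets $t_0-c\,g_X(t_0)>t_0-c\,g_Y(t_0)$ for \emph{every} $c>0$, not ``$<$ for $c$ large''. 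More structurally, $t_0-c\,g_X(t_0)$ and $t_0-c\,g_Y(t_0)$ are both merely \emph{lower} bounds for $h_X(c)$ and $h_Y(c)$, so comparing them cannot yield $h_X(c)>h_Y(c)$ unless one of them is actually the supremum. The fix is not to take $c$ large but to pick $c$ so that $t_0$ is the maximizer for the \emph{$Y$-problem}: with $c\coloneqq 1/g_Y'(t_0)$ (which lies in $(1,\infty)$ provided $0<g_Y'(t_0)<1$), concavity gives $h_Y(c)=t_0-c\,g_Y(t_0)<t_0-c\,g_X(t_0)\le h_X(c)$, the desired contradiction. The boundary cases $g_Y'(t_0)=0$ (then $g_Y(t_0)=0$, so $g_X(t_0)<0$ is impossible) and $g_Y'(t_0)\uparrow 1$ need a short separate argument.

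Your closing remark---packaging the converse as ``pointwise order of nonnegative nondecreasing convex functions is equivalent to order of their Legendre-type transforms for all $c>0$''---is exactly the right abstraction, and is essentially Fenchel biconjugation $g=g^{**}$ applied to $g_X$ and $g_Y$. The paper itself does not give a self-contained proof here: it cites \citet{Pichler2024} and summarizes that argument as proceeding through dual-space representations and the positive cone of the dual norm, i.e.\ a Banach-space duality rather than the scalar convex-analysis lemma you suggest. Your route is more elementary and would work once you replace the large-$c$ heuristic by the correct choice of~$c$ (equivalently, by the biconjugation identity), with some care at $\alpha\in\{0,1\}$ in the conjugate variable.
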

The proof of Theorem~\ref{thm:3} rigorously characterizes stochastic dominance using the framework of higher-order risk measures and their norm-based properties.
It establishes that $X \preccurlyeq^{\|\cdot\|} Y$ is equivalent to the inequality
\[	 \mathcal R_\beta(-X) \geq \mathcal R_\beta(-Y)  \text{ for all }  \beta \in (0, 1). \]
The equivalence is derived through norm-based conditions \( \| (t - X)_+ \| \geq \| (t - Y)_+ \| \) for all \( t \), linked to risk functional properties in the associated Banach space.
Dual space representations further strengthen the characterization by incorporating constraints from the positive cone of the dual norm.
This integration of risk measures, norms, and duality forms the technical backbone of the theorem.

\subsection{Stochastic dominance in numerical computations}\label{sec:662}
To verify that $X\preccurlyeq^{\|\cdot\|} Y$ it is necessary to verify the defining condition~\eqref{eq:3} for every $t\in \mathbb R$, or~\ref{enu:5} in Theorem~\ref{thm:3} above for every $\beta\in(0,1)$.
In both cases, these are infinitely many comparisons, intractable for numerical computations.
This is difficult, perhaps impossible to ensure in numerical computations.

In what follows we develop an equivalent characterization, which builds on only \emph{finitely many} risk levels. With this,  the comparison $X\preccurlyeq^{\|\cdot\|} Y$ is numerically tractable.

We start the exposition with the following lemma on convexity (concavity).
\begin{lemma}\label{lem:150}
  For~$Y$ fixed in the domain of $\mathcal R$, the mapping \[ \beta\mapsto (1-\beta)\cdot\mathcal R_\beta(Y)\] is concave.
\end{lemma}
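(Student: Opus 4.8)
The plan is to recognize $(1-\beta)\,\mathcal R_\beta(Y)$ as an infimum of affine functions of $\beta$ and conclude concavity from the fact that a pointwise infimum of concave (here, affine) functions is concave. Starting from the definition \eqref{eq:4}, multiply through by $(1-\beta)$ to obtain
\begin{equation}
  (1-\beta)\,\mathcal R_\beta(Y) = \inf_{t\in\mathbb R}\ \Bigl\{(1-\beta)\,t + \|(Y-t)_+\|\Bigr\}.
\end{equation}
The key observation is that, for each \emph{fixed} $t\in\mathbb R$, the map
\[
  \beta \;\longmapsto\; (1-\beta)\,t + \|(Y-t)_+\|
\]
is affine in $\beta$ (the term $\|(Y-t)_+\|$ does not depend on $\beta$, and $(1-\beta)t$ is linear in $\beta$). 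Hence $\beta\mapsto(1-\beta)\mathcal R_\beta(Y)$ is an infimum over the family indexed by $t$ of affine functions of $\beta$, and therefore concave on $(0,1)$.

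The only points that need a word of care are: first, that the infimum is finite (not $-\infty$) for $\beta\in(0,1)$, which holds because $Y$ is assumed to lie in the domain of $\mathcal R$, so $\mathcal R_\beta(Y)>-\infty$ and $(1-\beta)>0$; and second, that taking an infimum genuinely preserves concavity even when the infimum is not attained — this is the standard fact that if $h(\beta)=\inf_{t} a_t(\beta)$ with each $a_t$ concave, then for $\beta_\lambda=(1-\lambda)\beta_0+\lambda\beta_1$ one has $a_t(\beta_\lambda)\ge (1-\lambda)a_t(\beta_0)+\lambda a_t(\beta_1)\ge (1-\lambda)h(\beta_0)+\lambda h(\beta_1)$, and taking the infimum over $t$ on the left gives $h(\beta_\lambda)\ge (1-\lambda)h(\beta_0)+\lambda h(\beta_1)$.

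There is essentially no hard part here; the lemma is a one-line consequence of the envelope/infimum representation once $(1-\beta)$ is absorbed into the bracket. The only mild subtlety — and the reason the statement is phrased with the weight $(1-\beta)$ rather than for $\mathcal R_\beta(Y)$ itself — is that without the multiplication the inner functions $\beta\mapsto t+\frac{1}{1-\beta}\|(Y-t)_+\|$ are convex rather than affine in $\beta$, so the infimum structure would give nothing; the role of the weight is precisely to linearize the $\frac{1}{1-\beta}$ factor. I would present the argument in the order above: rewrite using \eqref{eq:4}, note affineness in $\beta$ for fixed $t$, invoke the infimum-of-concave-is-concave principle, and remark briefly on finiteness of the infimum on the domain of $\mathcal R$.
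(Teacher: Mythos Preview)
Your proof is correct and follows essentially the same approach as the paper: both recognize $(1-\beta)\,\mathcal R_\beta(Y)=\inf_{t}\{(1-\beta)t+\|(Y-t)_+\|\}$ as an infimum of affine functions of~$\beta$ and deduce concavity. The paper's version picks a minimizer $t_\lambda$ at the intermediate point and splits $(1-\beta_\lambda)t_\lambda$ as a convex combination before passing to the infimum, whereas you invoke the general infimum-of-affine principle directly; your remark that attainment of the infimum is not needed is a small but welcome refinement over the paper's presentation.
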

\begin{proof}
For $\lambda\in(0,1)$, define $\beta_\lambda\coloneqq (1-\lambda)\beta_0+\lambda\,\beta_1$. Choose $t_\lambda$ in~\eqref{eq:4} minimizing $\mathcal R_{\beta_\lambda}(Y)$. Then
\begin{align}
  (1-\beta_\lambda)\,\mathcal R_{\beta_\lambda}(Y)  &= (1-\beta_\lambda) t_\lambda + \|(Y-t_\lambda)_+\| \\
                              &= (1-\lambda)\bigl((1-\beta_0)t_\lambda+ \|(Y-t_\lambda)_+\|\bigr) +\lambda\bigl((1-\beta_1)t_\lambda+\|(Y-t_\lambda)_+\|\bigr) \\
                              & \ge(1-\lambda)(1-\beta_0)\,\mathcal R_{\beta_0}(Y)+ \lambda(1-\beta_1)\,\mathcal R_{\beta_1}(Y)
\end{align}
by passing to the infimum in $\mathcal R_{\beta_0}$ and $\mathcal R_{\beta_1}$ in the latter expression.
\end{proof}

The latter result on convexity leads to the following result on the derivative of the risk functional with respect to the risk level.
\begin{theorem}\label{thm:673}
For $Y$ in the domain of $\mathcal R$ and $\beta\in(0,1)$, the derivative with respect to the risk rate is
\[ {\mathrm d \over \mathrm d\beta} \mathcal R_\beta(Y)= \frac{\mathcal R_\beta(Y)-t_Y(\beta)}{1-\beta}, \]
where $t_Y(\beta)$ minimizes the higher order risk measure $\mathcal R_\beta(Y)$, cf.~\eqref{eq:4}.
\end{theorem}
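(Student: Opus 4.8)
The plan is to differentiate the defining relation $\mathcal R_\beta(Y)=t_Y(\beta)+\frac{1}{1-\beta}\|(Y-t_Y(\beta))_+\|$ directly, exploiting the envelope property of the minimizer. First I would define, for brevity, $\varphi(t)\coloneqq\|(Y-t)_+\|$, so that $\mathcal R_\beta(Y)=\inf_{t}\bigl(t+\tfrac{1}{1-\beta}\varphi(t)\bigr)$ and $t_Y(\beta)$ attains this infimum. The function $(t,\beta)\mapsto t+\tfrac{1}{1-\beta}\varphi(t)$ is jointly continuous, convex in $t$ for each fixed $\beta\in(0,1)$ (since $\varphi$ is convex by Remark~\ref{rem:164} and $\tfrac{1}{1-\beta}>0$), and smooth in $\beta$. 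The first-order optimality condition in $t$ reads $1+\tfrac{1}{1-\beta}\varphi'(t_Y(\beta))=0$, i.e.\ $\varphi'(t_Y(\beta))=-(1-\beta)$, using that $\varphi$ is assumed differentiable with continuous derivative (the standing assumption at the end of Remark~\ref{rem:164}).

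Next I would invoke the envelope theorem (Danskin's theorem): since the objective is differentiable in $\beta$ with $\partial_\beta\bigl(t+\tfrac{1}{1-\beta}\varphi(t)\bigr)=\tfrac{1}{(1-\beta)^2}\varphi(t)$, and the minimizer varies (locally) in a controlled way, the derivative of the value function is obtained by evaluating this partial derivative at the optimal $t=t_Y(\beta)$:
\[
  \frac{\mathrm d}{\mathrm d\beta}\mathcal R_\beta(Y)=\frac{\varphi\bigl(t_Y(\beta)\bigr)}{(1-\beta)^2}.
\]
It then remains to rewrite $\varphi(t_Y(\beta))$ in terms of $\mathcal R_\beta(Y)$. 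From the definition, $\mathcal R_\beta(Y)=t_Y(\beta)+\tfrac{1}{1-\beta}\varphi(t_Y(\beta))$, so $\varphi(t_Y(\beta))=(1-\beta)\bigl(\mathcal R_\beta(Y)-t_Y(\beta)\bigr)$. Substituting yields
\[
  \frac{\mathrm d}{\mathrm d\beta}\mathcal R_\beta(Y)=\frac{(1-\beta)\bigl(\mathcal R_\beta(Y)-t_Y(\beta)\bigr)}{(1-\beta)^2}=\frac{\mathcal R_\beta(Y)-t_Y(\beta)}{1-\beta},
\]
which is the claim.

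Alternatively, and perhaps more in the spirit of the preceding Lemma~\ref{lem:150}, I would work with $h(\beta)\coloneqq(1-\beta)\mathcal R_\beta(Y)=\inf_t\bigl((1-\beta)t+\varphi(t)\bigr)$, which is concave as an infimum of affine functions of $\beta$; its derivative by the envelope theorem is $h'(\beta)=-t_Y(\beta)$. Then $\mathcal R_\beta(Y)=h(\beta)/(1-\beta)$ gives, by the quotient rule, $\frac{\mathrm d}{\mathrm d\beta}\mathcal R_\beta(Y)=\frac{h'(\beta)(1-\beta)+h(\beta)}{(1-\beta)^2}=\frac{-t_Y(\beta)(1-\beta)+(1-\beta)\mathcal R_\beta(Y)}{(1-\beta)^2}$, which simplifies to the stated formula. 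The main obstacle is the rigorous justification of the envelope/Danskin step: one must ensure either that $t_Y(\beta)$ is (locally Lipschitz or) differentiable in $\beta$, or appeal to the version of Danskin's theorem valid for convex parametric minimization where the minimizer need only be a measurable selection and the value function is differentiable precisely when the optimal set is a singleton. Under the paper's standing smoothness assumption on $t\mapsto\|(Y-t)_+\|$ and strict convexity considerations, the minimizer $t_Y(\beta)$ is unique and varies continuously, so the envelope argument applies cleanly; handling boundary behaviour as $\beta\to0$ or $\beta\to1$, if needed, would be a minor supplementary point rather than a genuine difficulty.
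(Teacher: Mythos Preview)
Your proposal is correct, and your second alternative is essentially the paper's own argument: the paper sets $f(\beta)\coloneqq(1-\beta)\mathcal R_\beta(Y)=\min_t\bigl((1-\beta)t+\|(Y-t)_+\|\bigr)$, computes $f'(\beta)=-t_Y(\beta)$ via the chain rule with the $t$-derivative term vanishing by optimality (i.e.\ the envelope step written out explicitly), and then applies the quotient rule exactly as you do. Your first route, applying the envelope theorem directly to $\mathcal R_\beta(Y)$ rather than to $(1-\beta)\mathcal R_\beta(Y)$, is a harmless reparametrization of the same idea.
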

\begin{proof}
  As in Lemma~\ref{lem:150} above consider the objective
  \begin{equation}
  f(\beta,t)\coloneqq (1-\beta)t+\|(Y-t)_+\|.
  \end{equation}
    For $\beta$ fixed, the objective is concave, and we may choose $t(\beta)$ optimal in~\eqref{eq:3}.
    Define the function
    \begin{equation}\label{eq:46}
        f(\beta)\coloneqq f\bigl(\beta,t(\beta)\bigr)= \min_{t\in\mathbb R}\  f(\beta,t) = (1-\beta)\cdot\mathcal R_\beta(Y).
    \end{equation}
    It holds that
    \begin{align}
        f^\prime(\beta)&= {\partial \over \partial\beta} f\bigl(\beta,t(\beta)\bigr)+ {\partial \over \partial t} f\bigl(\beta,t(\beta)\bigr)\cdot t^\prime(\beta).\\
               & = -t(\beta)+ {\partial \over \partial t} f\bigl(\beta,t(\beta)\bigr)\cdot t^\prime(\beta)\\
                 &= -t(\beta),    \label{eq:147}
    \end{align}
    as $t(\beta)$ is optimal in~\eqref{eq:3} and consequently $0\in \partial_t f\bigl(\beta,t(\beta)\bigr)$.
    For the risk functional it follows that
    \[{\mathrm d \over \mathrm d\beta}\mathcal R_\beta(Y)={\mathrm d\over \mathrm d\beta}{f(\beta) \over 1-\beta}= {-t(\beta)(1-\beta)+ f(\beta) \over (1-\beta)^2}= {\mathcal R_\beta(Y)-t(\beta) \over 1-\beta},\]
    the assertion.
\end{proof}
\begin{corollary}
Let $t_Y(\beta)$ be the minimizer for $\mathcal R_\beta(Y)$ in~\eqref{eq:3}. The function \[\beta\mapsto t_Y(\beta)\] is non-decreasing.
\end{corollary}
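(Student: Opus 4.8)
The plan is to obtain the monotonicity directly from the first-order structure already established, via either of two short routes.

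\emph{Route through the proof of Theorem~\ref{thm:673}.} Multiplying the objective in~\eqref{eq:4} by the positive constant $1-\beta$ does not change its set of minimizers, so $t_Y(\beta)$ also minimizes $f(\beta,t)\coloneqq(1-\beta)\,t+\|(Y-t)_+\|$, and the optimal-value function $f(\beta)=(1-\beta)\,\mathcal R_\beta(Y)$ satisfies $f^\prime(\beta)=-t_Y(\beta)$, cf.~\eqref{eq:147}. By Lemma~\ref{lem:150}, $f$ is concave on $(0,1)$, and a concave function has non-increasing derivative; hence $\beta\mapsto f^\prime(\beta)=-t_Y(\beta)$ is non-increasing, which is precisely the assertion. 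In other words, the claim is just the derivative form of the concavity in Lemma~\ref{lem:150}.

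\emph{Route through an exchange argument.} This avoids differentiability and is robust to non-uniqueness of the minimizer, so I would present it as the main proof. Fix $\beta_0<\beta_1$ in $(0,1)$ and let $t_i$ be any minimizer of $f(\beta_i,\cdot)$. Optimality gives $f(\beta_0,t_0)\le f(\beta_0,t_1)$ together with $f(\beta_1,t_1)\le f(\beta_1,t_0)$; adding these two inequalities and cancelling the common norm terms $\|(Y-t_0)_+\|$ and $\|(Y-t_1)_+\|$ leaves $(1-\beta_0)\,t_0+(1-\beta_1)\,t_1\le(1-\beta_0)\,t_1+(1-\beta_1)\,t_0$, i.e.\ $(\beta_1-\beta_0)(t_0-t_1)\le 0$, whence $t_0\le t_1$. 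Since this holds for every pair of selected minimizers, every selection $\beta\mapsto t_Y(\beta)$ is non-decreasing.

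The only point requiring care is the status of $t_Y(\beta)$ as a function: the norm term $t\mapsto\|(Y-t)_+\|$ may be affine on an interval, so a priori the minimizer in~\eqref{eq:4} need not be unique and $\beta\mapsto t_Y(\beta)$ is merely a selection. The exchange argument dispenses with this issue automatically, and under the standing smoothness hypothesis of Remark~\ref{rem:164} the minimizer is in fact unique and $f$ is continuously differentiable, so the two routes coincide; thus I do not anticipate any real obstacle beyond bookkeeping the selection.
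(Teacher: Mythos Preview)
Your first route is exactly the paper's proof: it invokes the concavity of $f(\beta)=(1-\beta)\mathcal R_\beta(Y)$ from Lemma~\ref{lem:150}, notes that a concave function has non-increasing derivative, and combines this with $f'(\beta)=-t_Y(\beta)$ from~\eqref{eq:147}. Your second route via the exchange argument is a genuinely different and more elementary alternative: by adding the two optimality inequalities at $\beta_0<\beta_1$ you bypass any differentiability or uniqueness assumptions on the minimizer, which the paper's argument implicitly relies on through~\eqref{eq:147}. The exchange argument is thus more robust and self-contained, while the paper's route has the advantage of being a one-line consequence of results already proved; your discussion of the selection issue is a welcome clarification that the paper glosses over.
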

\begin{proof}
The function~$f$ introduced in~\eqref{eq:46} is concave by Lemma~\ref{lem:150}, that is, $f^\prime(\cdot)$ is non-increasing. It follows with~\eqref{eq:147} that $t_Y(\cdot)$ is non-decreasing, the assertion.
\end{proof}
\begin{theorem}[Verification of stochastic dominance relations]\label{thm:140}
To verify that $X\preccurlyeq^{\|\cdot\|} Y$ it is sufficient to verify
\begin{equation}\label{eq:24}   \mathcal R_{\beta_i}(-X)\ge \mathcal R_{\beta_i}(-Y)
  \end{equation}
  for the (\emph{finitely many}) risk levels
  \[  \beta_i,\quad i= 1, \dots, n;\]
the critical risk levels $\beta_i$  are chosen so that $\beta_i < \gamma_i < \beta_{i+1}$, where
\begin{align}
  t_{-X}(\beta)& \le t_{-Y}(\beta) \text{ for } \beta\in (\beta_i,\gamma_i) \text{ and}\label{eq:25}\\
   t_{-X}(\beta)& \ge t_{-Y}(\beta) \text{ for } \beta\in (\gamma_i,\beta_{i+1})\label{eq:26}
\end{align}
for $i=1, 2,\dots, n$.
\end{theorem}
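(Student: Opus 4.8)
The plan is to mirror the argument of Theorem~\ref{thm:156}, but with the roles of $t$ and $\beta$ interchanged: there the test points were the $t$'s where the derivatives $g_X'$ and $g_Y'$ agree, and here the test points will be the risk levels $\beta$ where the minimizers $t_{-X}(\beta)$ and $t_{-Y}(\beta)$ agree. The bridge between the two pictures is Theorem~\ref{thm:673}, which tells us that
\[
  {\mathrm d\over\mathrm d\beta}\,\mathcal R_\beta(-X)=\frac{\mathcal R_\beta(-X)-t_{-X}(\beta)}{1-\beta},
\]
and likewise for $Y$. So I would first record the key sign observation: for a fixed $\beta$, if $\mathcal R_\beta(-X)\ge\mathcal R_\beta(-Y)$ already holds, then the sign of ${\mathrm d\over\mathrm d\beta}\bigl(\mathcal R_\beta(-X)-\mathcal R_\beta(-Y)\bigr)$ is governed, up to the positive factor $1/(1-\beta)$, by $\bigl(\mathcal R_\beta(-X)-\mathcal R_\beta(-Y)\bigr)-\bigl(t_{-X}(\beta)-t_{-Y}(\beta)\bigr)$; in particular, at any $\beta$ where the two risk values are \emph{equal}, the derivative of their difference has the opposite sign to $t_{-X}(\beta)-t_{-Y}(\beta)$.

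Next I would set up the monotonicity structure. By the Corollary, $\beta\mapsto t_{-X}(\beta)$ and $\beta\mapsto t_{-Y}(\beta)$ are non-decreasing, and by the standing assumption in Remark~\ref{rem:164} (continuity of $t\mapsto\|(\cdot-X)_+\|$ and its derivative) these minimizers depend continuously on $\beta$. Hence the difference $d(\beta)\coloneqq t_{-X}(\beta)-t_{-Y}(\beta)$ is continuous, and the set $\{\beta: d(\beta)=0\}$ partitions $(0,1)$ into the sign-intervals described by~\eqref{eq:25}--\eqref{eq:26}; the $\gamma_i$ are the sign changes of $d$ and the $\beta_i$ are chosen (one in each maximal interval on which $d$ does not change sign around a crossing, as in the theorem's indexing) as the proposed finite test set. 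The claim~\ref{enu:5} of Theorem~\ref{thm:3} that we must establish is $h(\beta)\coloneqq\mathcal R_\beta(-X)-\mathcal R_\beta(-Y)\ge 0$ for all $\beta\in(0,1)$, given only that $h(\beta_i)\ge0$.

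Then I would run the interval argument. Fix an arbitrary $\beta\in(0,1)$ and suppose, for contradiction, $h(\beta)<0$. Since $h$ is continuous and $h(\beta_i)\ge0$ at the surrounding test points, there is a point $s$ between $\beta$ and the nearest such $\beta_i$ with $h(s)=0$, chosen as the closest zero of $h$ to $\beta$ on the appropriate side, so that $h$ has constant sign (namely $<0$) strictly between $s$ and $\beta$. On the interval with endpoints $s$ and $\beta$, using the sign observation from the first paragraph together with $h<0$ there, one shows ${\mathrm d\over\mathrm d\beta}h$ cannot drive $h$ below zero and back — more precisely, integrating $h'$ from $s$ to $\beta$ and using that on this interval the monotone functions $t_{-X},t_{-Y}$ keep $d(\beta)$ of the sign dictated by~\eqref{eq:25}--\eqref{eq:26} relative to the location of $s$, one gets $h(\beta)\ge h(s)=0$, the contradiction. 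The two cases $\beta>s$ and $\beta<s$ are handled symmetrically, exactly as the two enumerated cases in the proof of Theorem~\ref{thm:156}.

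The main obstacle I anticipate is the bookkeeping that ties the sign of $h'$ to the sign of $d$ precisely on the right sub-interval: one needs the crossing points $\gamma_i$ of $d$ to interlace with the zeros of $h$ in the way~\eqref{eq:25}--\eqref{eq:26} asserts, and to argue that between consecutive test levels $\beta_i$ the function $h$ has at most one ``dip'', so that a single comparison at each $\beta_i$ suffices. This is where the concavity of $\beta\mapsto(1-\beta)\mathcal R_\beta(\cdot)$ (Lemma~\ref{lem:150}), hence the monotonicity of the minimizers, does the real work — it is the analogue of the continuity of $g_X',g_Y'$ in Theorem~\ref{thm:156} — and, as in Remark~\ref{rem:167}, some care is needed near $p=1$ where the minimizers may fail to be single-valued or continuous.
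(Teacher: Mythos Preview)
Your high-level plan is right and matches the paper's: reduce via Theorem~\ref{thm:3} to the risk-level inequality, use Theorem~\ref{thm:673} for the derivative, and then run a two-case interval argument parallel to Theorem~\ref{thm:156}. The gap is in the quantity you choose to track. Working with $h(\beta)=\mathcal R_\beta(-X)-\mathcal R_\beta(-Y)$ gives
\[
  h'(\beta)=\frac{h(\beta)-d(\beta)}{1-\beta},\qquad d(\beta)\coloneqq t_{-X}(\beta)-t_{-Y}(\beta),
\]
so the sign of $h'$ depends on $h$ itself, not only on $d$. Your contradiction sketch (``at a zero $s$ of $h$ the derivative has the sign of $-d(s)$, so $h$ cannot dip'') does not close: at such an $s$ one only gets $h'(s)=-d(s)/(1-s)\ge 0$, which is compatible with $h$ touching zero and staying there or crossing with $d(s)=0$; you would need an additional Gr\"onwall-type comparison to finish, and the worry you flag about interlacing zeros of $h$ with the $\gamma_i$ is a symptom of this coupling.

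The paper sidesteps all of this by tracking $f(\beta)\coloneqq(1-\beta)\,\mathcal R_\beta(\cdot)$ instead (the concave function from Lemma~\ref{lem:150}). From the proof of Theorem~\ref{thm:673} one has $f'(\beta)=-t(\beta)$, so
\[
  \bigl(f_{-X}-f_{-Y}\bigr)'(\beta)=-d(\beta),
\]
which decouples the derivative from the function value. Since $1-\beta>0$, the inequality $f_{-X}(\beta)\ge f_{-Y}(\beta)$ is equivalent to $\mathcal R_\beta(-X)\ge\mathcal R_\beta(-Y)$, and the argument becomes a direct integration exactly as in Theorem~\ref{thm:156}: for $\beta\in(\beta_i,\gamma_i)$ integrate forward from $\beta_i$ using $d\le 0$, and for $\beta\in(\gamma_i,\beta_{i+1})$ integrate backward from $\beta_{i+1}$ using $d\ge 0$. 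No contradiction argument, no zeros of $h$, no interlacing bookkeeping. You already cite Lemma~\ref{lem:150}; promoting $(1-\beta)\mathcal R_\beta$ from an auxiliary observation to the main object is the missing step.
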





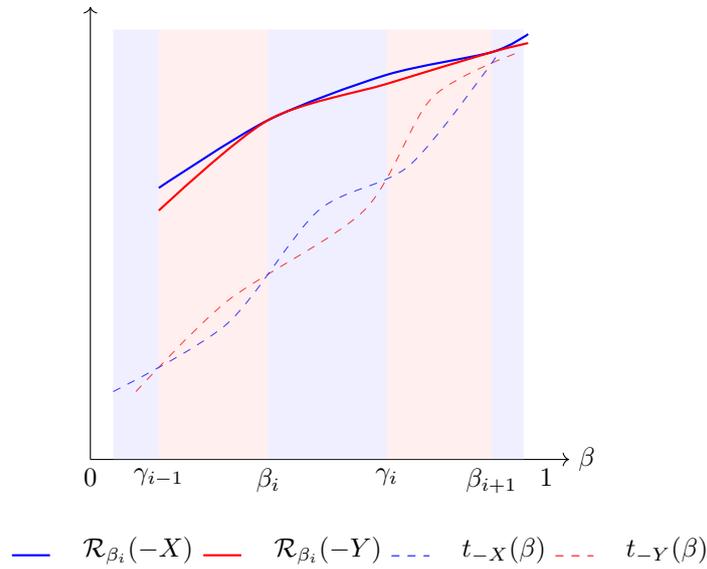
\begin{figure}[!htb]
  \centering
\begin{tikzpicture}[scale=0.60]
  \centering
      
        \draw[->] (0.0,0) -- (10.5,0) node[right] {$\beta$};
        \draw[->] (0.0,0) -- (0.0,10.0) node[above] {};
      
         \foreach \x/\name in {0/$0$,1.5/$\gamma_{i-1}$, 3.9/$\beta_i$, 6.5/$\gamma_i$, 8.8/$\beta_{i+1}$,10/$1$} {
                  \node[below] at (\x,0) {\name};
         }
      
        \draw[thin, smooth, red, dashed] plot coordinates {(1,1.5)(3,3.5) (6,5.5) (7.5,8.) (9.4,9)};
        \draw[thin, smooth, blue, dashed] plot coordinates { (0.5,1.5) (3,3) (5,5.5) (7,6.5) (8.9,8.9)};
      
        \fill[blue!20, opacity=0.3] (0,0) -- plot[smooth, domain=0:3] (0.5,0) -- (0.5,9.5) -- (1.5,9.5) -- (1.5,0) -- cycle;
        
        \fill[red!20, opacity=0.3] (-1,0) -- plot[smooth, domain=-1:0] (1.5,0) -- (1.5,9.5) -- (3.9,9.5) -- (3.9,0) -- cycle;
      
        \fill[blue!20, opacity=0.3] (0,0) -- plot[smooth, domain=0:3] (3.9,0) -- (3.9,9.5) -- (6.5,9.5) -- (6.5,0) -- cycle;
      
        \fill[red!20, opacity=0.3] (3,0) -- plot[smooth, domain=3:6] (6.5,0) -- (6.5,9.5) -- (8.8,9.5) -- (8.8,0) -- cycle;
      
        \fill[blue!20, opacity=0.3] (0,0) -- plot[smooth, domain=0:3] (8.8,0) -- (8.8,9.5) -- (9.5,9.5) -- (9.5,0) -- cycle;
        \draw[thick, smooth, blue] plot coordinates {(1.5,6)(3.9,7.5) (6.5,8.5) (8.8,9)  (9.6,9.4)};
        \draw[thick, smooth, red ] plot coordinates {(1.5,5.5)(3.9,7.5) (6.5,8.3) (8.8,9) (9.6,9.2)};
      
      
       \node[align=center] at (6,-2) {
          \begin{tabular}{cccccccc}
            \begin{tikzpicture}
                \draw[blue, thick] (0,0) -- (0.5,0);
            \end{tikzpicture} & $\mathcal{R}_{\beta_i}(-X)$ 
            \begin{tikzpicture}
                \draw[red, thick] (0,0) -- (0.5,0);
            \end{tikzpicture} & $\mathcal{R}_{\beta_i}(-Y)$ 
            \begin{tikzpicture}
              \draw[blue, thin, dashed] (0,0) -- (0.5,0);
          \end{tikzpicture} & $t_{-X}(\beta)$ 
          \begin{tikzpicture}
              \draw[red, thin, dashed] (0,0) -- (0.5,0);
          \end{tikzpicture} & $t_{-Y}(\beta)$ 
          \end{tabular}
        };
    \end{tikzpicture}
    \caption{Verification of stochastic dominance relations based on Theorem \ref{thm:140}.
    The plot illustrates \( t_{-X}(\beta) \) and \( t_{-Y}(\beta) \) (blue dashed and red dashed), respectively, for the risk levels.
    These functions provide a visual representation of the conditions in Equations \eqref{eq:25} and \eqref{eq:26}.
    The shaded regions represent dominance intervals, while the solid curves \( \mathcal{R}_{\beta_i}(-X) \) (blue) and \( \mathcal{R}_{\beta_i}(-Y) \) (red) show the relative dominance of \( X \) and \( Y \) at the critical risk levels \( \beta_i \).
    The critical points, denoted by \( \beta_i \) and \( \gamma_i \), satisfy the dominance conditions required to verify \( X \preccurlyeq^{\|\cdot\|} Y \).}

  \label{fig:RiskThm}
\end{figure}
\begin{remark}\label{rem:217}
For the crucial risk levels (test points) $\beta_i$ in the preceding Theorem~\ref{thm:140} it holds that
\[  t_{-X}(\beta)\ge t_{-Y}(\beta) \text{ for }\beta\in (\gamma_{i-1}, \beta_i) \text{ and } t_{-X}(\beta)\le t_{-Y}(\beta) \text{ for }\beta\in (\beta_i, \gamma_i),\]
so that the curves $t_{-X}(\cdot)$ and $t_{-Y}(\cdot)$ intersect at~$\beta_i$, where $t_{-X}(\cdot)$ passes $t_{-Y}(\cdot)$ \emph{from above} at~$\beta_i$, while $t_{-X}$ passes $t_{-Y}(\cdot)$ \emph{from below} at~$\gamma_i$.
For continuous $t_{-X}(\cdot)$ and $t_{-Y}(\cdot)$, the crucial points in the preceding Theorem~\ref{thm:140} are among the points, where the minimizers of $\mathcal R_\beta(-X)$ and $\mathcal R_\beta(-Y)$ coincide, that is,
\[ t_{-X}(\beta_i) = t_{-Y}(\beta_i), \quad i=1,\dots,n;\]
with a similar argument it holds as well that
\[ t_{-X}(\gamma_i) = t_{-Y}(\gamma_i), \quad i=1,\dots,n.\] Refer to Figure~\ref{fig:RiskThm} for a visual representation of the core idea.
\end{remark}

\begin{proof}[Proof of Theorem~\ref{thm:140}]
Recall first from Theorem~\ref{thm:3}\,\ref{enu:5} that $X\preccurlyeq^{(p)} Y$ is equivalent to
\[\mathcal R_\beta(-X)\ge\mathcal R_\beta(-Y)\text{ for all }\beta\in(0,1).\]

Assuming~\eqref{eq:24}, we demonstrate that $\mathcal R_\beta(-X) \ge \mathcal R_\beta(-Y)$ for $\beta\in (\beta_i,\beta_{i+1})$ for $i=1,\dots,n$.
To this end we distinguish the following two cases, where we reuse the notation introduced in the proof of Theorem~\ref{thm:673}. As in the proof of  Theorem~\ref{thm:156} above, we distinguish two cases.
\begin{enumerate}
  \item Suppose that $\beta\in (\beta_i,\gamma_i)$, then, by~\eqref{eq:25}, $t_{-X}(\beta)\le t_{-Y}(\beta)$.
        With~\eqref{eq:47} we conclude that
        \begin{align}
        f_{-X}(\beta)&= f_{-X}(\beta_i)+\int_{\beta_i}^\beta f_{-X}^\prime(\gamma)\,\mathrm d\gamma\\
                  &=  f_{-X}(\beta_i)-\int_{\beta_i}^\beta t_{-X}(\gamma)\,\mathrm d\gamma\\
                  &\ge  f_{-Y}(\beta_i)-\int_{\beta_i}^\beta t_{-Y}(\gamma)\,\mathrm d\gamma  \label{eq:27}\\
                  &= f_{-Y}(\beta_i)+\int_{\beta_i}^\beta f_{-Y}^\prime(\gamma)\,\mathrm d\gamma\\
                  &= f_{-Y}(\beta),
        \end{align}
        where we have used~\eqref{eq:24} in~\eqref{eq:27}. It follows that $\mathcal R_\beta(-X)\ge \mathcal R_\beta(-Y)$ for all $\beta\in(\beta_i,\gamma_i)$.
  \item If $\beta\in (\gamma_i,\beta_{i+1})$, then $t_{-X}(\beta)\ge t_{-Y}(\beta)$ by assumption~\eqref{eq:26}. It holds that
      \begin{align}
        f_{-X}(\beta) &= f_{-X}(\beta_{i+1})- \int_\beta^{\beta_{i+1}}f^\prime_{-X}(\gamma)\mathrm d\gamma\\
        &= f_{-X}(\beta_{i+1})+ \int_\beta^{\beta_{i+1}}t_{-X}(\gamma)\mathrm d\gamma \\
        &\ge   f_{-Y}(\beta_{i+1})+ \int_\beta^{\beta_{i+1}}t_{-Y}(\gamma)\mathrm d\gamma \label{eq:28}\\
        &= f_{-Y}(\beta_{i+1})- \int_\beta^{\beta_{i+1}}f^\prime_{-Y}(\gamma) \mathrm d\gamma \\
        &= f_{-Y}(\beta),
      \end{align}
      where again~\eqref{eq:24} was used in~\eqref{eq:28}. It follows that $\mathcal R_\beta(-X)\ge \mathcal R_\beta(-Y)$ for $\beta\in (\gamma_i,\beta_{i+1})$, the remaining case.
\end{enumerate}
Combining the two cases above we find that $\mathcal R_\beta(-X)\ge \mathcal R_\beta(-Y)$ for all $\beta\in (\beta_i,\beta_{i+1})$.
The assertion for all $\beta\in \mathbb R$ thus follows by considering $i=1,\dots,n$.
\end{proof}
\begin{remark}\label{rem:759}
It is importan to note that the critical risk levels $\beta_i$, $i=1,\dots,n$, in~\eqref{eq:24}~-- in general~-- depend on \emph{both} random variables, on $X$ \emph{and} $Y$.
The following remark presents a notable exception to this rule.
\end{remark}
\begin{remark}[Average value-at-risk]\label{rem:225}
For the Lebesgue norm $\|\cdot\|_p$ with $p=1$, the optimizers are $t_{-X}(\beta)= \VaR_\beta(-X)$, and the function $t_{-X}(\cdot)$ is not continuous for discrete distributions. As $\beta\mapsto t_{-X}(\beta)$ is non-decreasing, the test points are
\[  \beta_i\in \bigl\{F_{-X}(\beta)\colon \beta\in \mathbb R\bigr\},\quad i=1,2,\dots,n.\]
In this case, the crucial risk levels $\beta_i$, $i=1,\dots,n$, are independent of the variable~$Y$ when comparing $X\preccurlyeq^{(p)} Y$. This is observed the first time in \cite{dentcheva2003optimization}.
\end{remark}

\section{Optimization with stochastic dominance constraints\label{sec:opti}}
This section introduces an optimization technique for \emph{non-linear} stochastic dominance constraints, building on the results discussed in the previous section.

\subsection{Problem setup}
Let
\[  \mathcal S\coloneqq \left\{(x_1,\dots,x_n)\colon x_i\ge 0 \text{ for } i=1,\dots,n,\text{ and } \sum_{i=1}^n x_i= 1\right\}\]
be the simplex.
The optimization problem~\eqref{eq:1}--\eqref{eq:2} can be restated as
\begin{align}\label{eq:45}
	\text{minimize } & \mathcal R(-x^\top\xi) \\
	\text{subject to } & \xi_0 \preccurlyeq^{(p)} x^\top\xi, \label{eq:49}\\
                              & x\in \mathcal S, \label{eq:32}
\end{align}
where $\xi_0$ is the return of a benchmark portfolio.
The problem itself is known in portfolio optimization.
Note, that the problem~\eqref{eq:45} is convex, as the objective is convex for $\mathcal R$ being a convex risk measure, and the constraints are convex by Proposition~\ref{prop:167} (see below).

By the definition in the preceding section, the optimization problem can be recast as
\begin{align}
	\text{minimize } & \mathcal R(-x^\top\xi) \\
	\text{subject to } & x\in \mathcal S, \text{ and}\\
                                &  g_p(t,x^\top\xi) \le 0 \text{ for all }t\in \mathbb R\label{eq:542},
\end{align}
where \begin{equation}\label{eq:531} g_p(t,X)\coloneqq \E(t-X)_+^p- \E(t-\xi_0)_+^p \end{equation} and~$X$ is a random variable and~$\xi_0$ the benchmark.

The following proposition demonstrates the convexity of constraint~\eqref{eq:49}. The result is stated slightly more general, involving an additional function~$h$.
\begin{proposition}\label{prop:167}
  Let $h$ be uniformly concave, i.e.,
  \[  h\bigl((1-\lambda)x_0+\lambda\,x_1,\xi\bigr)\ge (1-\lambda)h(x_0,\xi)+\lambda\,h(x_1,\xi) \]
  for all $\xi\in \Xi$ and $\lambda\in(0,1)$.
  For a random variable $\xi_0$ and $p\ge 1$, the set
  \[  \mathcal C\coloneqq \bigl\{x\colon \xi_0 \preccurlyeq^{\|\cdot\|} h(x;\xi)\bigr\}\]
  is convex.
\end{proposition}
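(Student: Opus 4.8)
The plan is to verify convexity of $\mathcal C$ directly from the definition of stochastic dominance with respect to the norm, namely Definition~\ref{def:150}, by showing that for $x_0,x_1\in\mathcal C$ and $\lambda\in(0,1)$, the convex combination $x_\lambda\coloneqq(1-\lambda)x_0+\lambda\,x_1$ satisfies $\|(t-\xi_0)_+\|\ge\|\bigl(t-h(x_\lambda;\xi)\bigr)_+\|$ for every $t\in\mathbb R$. The key structural fact is that the map $u\mapsto(t-u)_+$ is non-increasing and convex, so composing it with the concave map $x\mapsto h(x;\xi)$ yields a convex map $x\mapsto(t-h(x;\xi))_+$ (pointwise in $\xi$).

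First I would fix $t\in\mathbb R$ and $\xi\in\Xi$. Using uniform concavity of $h$, one has $h(x_\lambda;\xi)\ge(1-\lambda)h(x_0;\xi)+\lambda\,h(x_1;\xi)$. Since $u\mapsto(t-u)_+$ is non-increasing, this gives
\begin{align}
  \bigl(t-h(x_\lambda;\xi)\bigr)_+&\le\Bigl(t-(1-\lambda)h(x_0;\xi)-\lambda\,h(x_1;\xi)\Bigr)_+\\
  &=\Bigl((1-\lambda)\bigl(t-h(x_0;\xi)\bigr)+\lambda\bigl(t-h(x_1;\xi)\bigr)\Bigr)_+\\
  &\le(1-\lambda)\bigl(t-h(x_0;\xi)\bigr)_++\lambda\bigl(t-h(x_1;\xi)\bigr)_+,
\end{align}
where the last step uses convexity (subadditivity on nonnegative combinations) of $u\mapsto u_+$. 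This is the pointwise-in-$\xi$ analogue of the inequality~\eqref{eq:35} in Remark~\ref{rem:164}, and it holds almost surely as random variables.

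Next I would apply the norm. Because $\|\cdot\|$ is a norm, it is monotone with respect to the almost-sure order on nonnegative random variables — or, more carefully, one combines monotonicity of the $L^p$-type norm with the triangle inequality exactly as in the passage from~\eqref{eq:35} to~\eqref{eq:34}. Taking norms in the chain above yields
\[
  \bigl\|\bigl(t-h(x_\lambda;\xi)\bigr)_+\bigr\|\le(1-\lambda)\bigl\|\bigl(t-h(x_0;\xi)\bigr)_+\bigr\|+\lambda\bigl\|\bigl(t-h(x_1;\xi)\bigr)_+\bigr\|.
\]
Since $x_0,x_1\in\mathcal C$, each of the two norms on the right is bounded above by $\|(t-\xi_0)_+\|$, so the convex combination is too; hence $\|\bigl(t-h(x_\lambda;\xi)\bigr)_+\|\le\|(t-\xi_0)_+\|$. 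As $t\in\mathbb R$ was arbitrary, $\xi_0\preccurlyeq^{\|\cdot\|}h(x_\lambda;\xi)$, i.e.\ $x_\lambda\in\mathcal C$, which proves convexity.

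The main obstacle is the monotonicity step for a general norm $\|\cdot\|$: the statement is phrased for an abstract norm on random variables, and an arbitrary norm need not be monotone with respect to the almost-sure order. For the intended Lebesgue norms $\|\cdot\|_{p-1}$ this is immediate, and more generally the standing assumption that $\|\cdot\|$ arises via~\eqref{eq:36} (so that $\|(t-X)_+\|$ depends on $X$ only through $(t-X)_+\ge0$ in a monotone way) makes it harmless; I would state explicitly that monotonicity of the norm on the nonnegative cone is used here, consistent with the norms considered throughout the paper. A minor secondary point is that the reduction in Theorem~\ref{thm:156} is not needed: convexity of $\mathcal C$ is most cleanly obtained from the full family of constraints $t\in\mathbb R$, not from the finite test set.
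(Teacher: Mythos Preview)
Your proposal is correct and follows essentially the same route as the paper: use concavity of $h$ together with monotonicity and convexity of $u\mapsto(t-u)_+$ to obtain the pointwise inequality, then pass to norms via the triangle inequality and the membership of $x_0,x_1$ in $\mathcal C$. Your explicit flagging of the monotonicity of the norm on the nonnegative cone is in fact more careful than the paper, which uses it tacitly when passing from~\eqref{eq:41} to~\eqref{eq:43}.
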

\begin{proof}
  Define $x_\lambda\coloneqq (1-\lambda)x_0+\lambda\,x_1$.
  By the assumption on concavity of $h$, \[  h(x_\lambda,\xi)\ge (1-\lambda) h(x_0,\xi)+ \lambda\,h(x_1,\xi)\] and thus, by monotonicity and convexity of $x_+$,
  \begin{align}
    \bigl(t-h(x_\lambda,\xi)\bigr)_+&\le \bigl((1-\lambda)t+ \lambda\,t- (1-\lambda)h(x_0,\xi)- \lambda\,h(x_1,\xi)\bigr)_+\\
                        &\le (1-\lambda)\bigl(t-h(x_0,\xi)\bigr)_++ \lambda\bigl(t-h(x_1,\xi)\bigr)_+.\label{eq:41}
  \end{align}
  For $x_0$, $x_1\in \mathcal C$ it holds that
  \begin{equation}\label{eq:42}
    \|(t-\xi_0)_+\|\ge \|\bigl(t-h(x_i,\xi)\bigr)_+\|, \qquad i=1,\,2,
  \end{equation}
  and thus combining with~\eqref{eq:41}
  \begin{align}\label{eq:43}
    \| \bigl(t-\xi_0\bigr)_+\|&\ge (1-\lambda)\|\bigl(t-h(x_0,\xi)\bigr)_+\|+ \lambda \|\bigl(t-h(x_1,\xi)\bigr)_+\|\\
                            &\ge \|\bigl(t-h(x_\lambda,\xi)\bigr)_+\|.
  \end{align}
  Hence, the assertion.
\end{proof}
Using Theorem~\ref{thm:156}, we may express problem~\eqref{eq:45} as
\begin{align}\label{eq:146}
	\text{minimize } & \mathcal{R}(-x^\top \xi) \\
	\text{subject to } & x \in \mathcal{S}, \\
                                & g_p(t, x^\top \xi) \le 0,\label{eq:834} \\
                              & \qquad \text{for all } t \text{ such that } g_{p-1}(t, x^\top \xi) = 0. \label{eq:578} 
\end{align}
This optimization problem~\eqref{eq:146} is \emph{non-linear} and includes \emph{implicit} inequality constraints. The objective function is convex, and, by Proposition~\ref{prop:167}, the constraint~\eqref{eq:49} is also convex.
\subsection{Managing stochastic dominance constraints} 

To solve the optimization problem in \eqref{eq:45}--\eqref{eq:32}, we apply Newton’s method in a context that includes uncountably many constraints.
A critical component of our approach involves effectively handling the {stochastic dominance constraints} with finitely many constraints (cf.\,\eqref{eq:146}).
However, implementing our restarted optimization introduces indirect challenges.

To understand the complexity of this problem, it is important to analyze the implicit relationship between \( t \) and \( x \) in~\eqref{eq:578}.
The variable \( t \) is defined implicitly by the equation \( g_{p-1}(t, x^\top \xi) = 0 \), which shows that \( t \) depends on \( x \) through a non-explicit functional relationship.
This relationship can be written as \( t(x) \).
Substituting \( t(x) \) into the constraint transforms it into \( g_p(t(x), x^\top \xi) \leq 0 \), where \( t(x) \) must satisfy \( g_{p-1}(t(x), x^\top \xi) = 0 \).

This implicit coupling between \( x \) and \( t \) introduces challenges for solving the optimization problem in \eqref{eq:146}--\eqref{eq:578}.
The feasibility of the constraints requires solving \( g_{p-1}(t, x^\top \xi) = 0 \) for \( t \), which embeds a root-finding process within the optimization framework.
Furthermore, substituting \( t(x) \) into the inequality constraint \( g_p(t(x), x^\top \xi) \leq 0 \) makes the constraint inherently non-linear.
This combination of implicit and non-linear features underscores the intricate nature of the problem and its deviation from standard formulations with explicit and linear constraints.
In the following discussion, we explain how we tackle this challenge.
\begin{figure}[h]
  \centering
  \begin{subfigure}[t]{0.45\textwidth}
    \centering
    \begin{tikzpicture}[scale=1]
        
      \draw[thick,->] (-3.5,0) -- (3.5,0) node[right] {$t$};
      \draw[thick,->] (0,-3.5) -- (0,0.5) node[above] {$g_p(t, x^\top\xi)$};

      
      \draw[domain=-1.5:3,smooth,variable=\x,blue,thick] 
          plot ({\x},{-0.5*(\x-1)^2 + 0}) 
          node[below left] {$g_p(t, x^\top\xi)$};

      \fill[red] (-1,0) circle (2pt) node[below] {$t_1$};
      \fill[red] (1,0) circle (2pt) node[below] {$t_2$};  
      \fill[red] (2.5,0) circle (2pt) node[below] {$t_3$};

      \fill[red] (-1,{ -0.5*(-1-1)^2 + 0 }) circle (2pt) node[above left] {$g_p(t_1, x^\top\xi)$};
      \fill[red] (1,0) circle (3pt) node[above right] {$\overline{g}_p(x)$};  
      \fill[red] (2.5,{ -0.5*(2.5-1)^2 + 0 }) circle (2pt) node[below left] {$g_p(t_3, x^\top\xi)$};

      \draw[dashed] (1,0) -- (1,0);

    \end{tikzpicture}
    \caption{Function $\overline{g}(x)$ in an optimal configuration where $g_p(t, x^\top\xi) \leq 0 \quad \forall\,t \in \mathbb R$ \label{fig:FunG1}}
  \end{subfigure}
  \hfill
  \begin{subfigure}[t]{0.45\textwidth}
    \centering
    \begin{tikzpicture}[scale=0.75]
        
      \draw[thick,->] (-3.5,0) -- (4.5,0) node[right] {$t$};
      \draw[thick,->] (0,-3.5) -- (0,1.5) node[above] {$g_p(t, x^\top\xi)$};

      
      \fill[fill=orange, opacity=0.3] (-1.5,0) -- plot[domain=0:2, smooth] ({\x},{-0.5*(\x-1)^2 + 0.5}) -- (2.5,0) -- cycle;

      \draw[domain=-1.5:4,smooth,variable=\x,blue,thick] 
          plot ({\x},{-0.5*(\x-1)^2 + 0.5}) 
          node[below left] {$g_p(t, x^\top\xi)$};

      \fill[red] (-1,0) circle (2pt) node[below] {$t_1$};
      \fill[black] (1,0) circle (2pt) node[below left] {$\hat{t}$};  
      \fill[red] (2,0) circle (2pt) node[below] {$t_2$};  
      \fill[red] (3.5,0) circle (2pt) node[below] {$t_3$};

      \fill[red] (-1,{ -0.5*(-1-1)^2 + 0.5 }) circle (2pt) node[above left] {$g_p(t_1, x^\top\xi)$};
      \fill[red] (2,{ 0 }) circle (3pt) node[above right] {$\overline{g}_p(x) = 0$};  
      \fill[red] (3.5,{ -0.5*(3.5-1)^2 + 0.5 }) circle (2pt) node[below left] {$g_p(t_3, x^\top\xi)$};

    \end{tikzpicture}
    \caption{Function $\overline{g}(x)$ in a configuration where $g_p(t, x^\top\xi) \nleq 0 \quad \forall\,t \in \mathbb R$ \label{fig:FunG2}}
   \end{subfigure}
  
  \caption{Function configurations of $\overline{g}(x)$, cf.\ \eqref{eq:51}, illustrating various values of $t$, where $g_p(\tilde{t}, x^\top \xi) \leq 0$ and $g_{p-1}(\tilde{t}, x^\top \xi) = 0$ for $\tilde{t} \in \{t_1, t_2, t_3\}$, including cases where $g_p(t, x^\top\xi)$ satisfies or does not satisfy the condition $\forall\,t \in \mathbb{R}$.}
\end{figure}
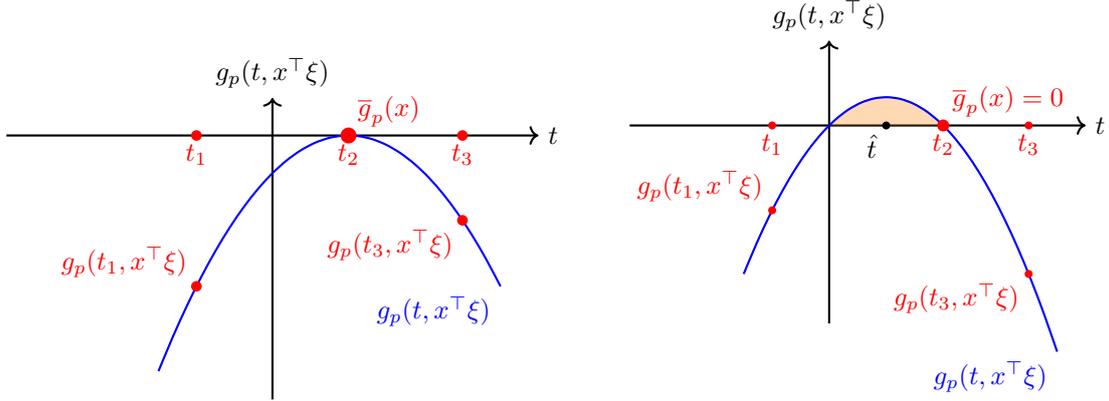

\newpage
We define the function 
\begin{align}\label{eq:53}
  g_p(x)&\coloneqq  \E(t-x^\top\xi)_+^p- \E(t-\xi_0)_+^p
  \shortintertext{and, accordingly,}
  \overline g_p(x)&\coloneqq  \max_{t\in \mathbb R} g_p(x)= \max_{t\in\mathbb R}\left\{ \E(t-x^\top\xi)_+^p- \E(t-\xi_0)_+^p \right\},
\end{align}
to frame the optimization problem as
\begin{align}\label{eq:50}
	\text{minimize } & \mathcal R(-x^\top\xi) \\
	\text{subject to } & x\in \mathcal S, \text{ and}\\
                                &  \overline g_p(x) \le 0.
\end{align}
With Proposition~\ref{prop:167}, the function $\overline g_p$ is
\begin{align}\label{eq:51}
  \overline g_p(x)&= \max \bigl\{g_p(t,x)\colon g_{p-1}(t,x)=0\bigr\},
\end{align}
where we have employed the auxiliary function
\begin{equation}\label{eq:54}  g_p(t,x)\coloneqq \E(t-x^\top\xi)_+^p-\E(t-\xi_0)_+^p.\end{equation}
We have reduced the problem of finitely many stochastic dominance constraints~\eqref{eq:578} to a single constraint focused on $\overline{g}_p(x)$.
The function $\overline{g}_p$ can be explicitly expressed as
\begin{equation}\label{eq:58}
 \overline{g}_p(x) = g_p\bigl(t(x), x\bigr),
\end{equation}
where \( t(x) \) is implicitly defined by
\begin{equation}\label{eq:56}
  g_{p-1}\bigl(t(x), x\bigr) = 0.
\end{equation}
Hence, the derivative is
\begin{equation}\label{eq:60}
\nabla \overline{g}(x) = \frac{\partial g_p}{\partial t}\bigl(t(x), x\bigr) \cdot \nabla t(x) + \frac{\partial g_p}{\partial x}\bigl(t(x), x\bigr).
\end{equation}
To find the gradient of \( t \), we differentiate~\eqref{eq:56}
\[
0 = \frac{\partial g_{p-1}}{\partial t}\bigl(t(x), x\bigr) \cdot \nabla t(x) + \nabla_x g_{p-1}\bigl(t(x), x\bigr),
\]
where \( \nabla_x g_p = \left(\frac{\partial g_p}{\partial x_1}, \dots, \frac{\partial g_p}{\partial x_d}\right) \).
Using the \emph{implicit function theorem}, we obtain
\[
\nabla t(x) = -\frac{\nabla g_{p-1}\bigl(t(x), x\bigr)}{\frac{\partial g_{p-1}}{\partial t}\bigl(t(x), x\bigr)},
\]
which gives the explicit form of~\eqref{eq:60} as
\begin{equation}\label{eq:57}
\nabla \overline{g}(x) = -\frac{\partial g_p}{\partial t}\bigl(t(x), x\bigr) \frac{\nabla_x g_{p-1}\bigl(t(x), x\bigr)}{\frac{\partial g_{p-1}}{\partial t}\bigl(t(x), x\bigr)} + \frac{\partial g_p}{\partial x}\bigl(t(x), x\bigr).
\end{equation}
For the partial derivatives of \( g(t, x) \) in~\eqref{eq:54} and~\eqref{eq:57}, we have
\begin{align}
  \nabla_x g_p\bigl(t,x\bigr)&= -p \E\bigl((t-x^\top\xi)_+^{p-1}\cdot\xi\bigr)
                         \shortintertext{ and}
  \frac{\partial g_p\bigl(t,x\bigr)}{\partial t}&= p \E(t-x^\top\xi)_+^{p-1}- p \E(t-\xi_0)_+^{p-1}= p\, g_{p-1}(t,x).
\end{align}

\paragraph{Key idea.}
To confirm that a candidate \( x \) is indeed optimal, we only need to show that \( g_p(t, x^\top \xi) \) stays non-positive for all \( t \in \mathbb{R} \) by checking a single crucial point, \( \overline{g}(x) \) (see Figure~\ref{fig:FunG1}). This result follows from Theorem~\ref{thm:156} (cf.\,\eqref{eq:51}).

\subsection{Derivations and algorithm}
This section outlines our optimization technique and the algorithm for solving the problem~\eqref{eq:50} using Newton's method.
The optimization problem involves both the objective risk function (cf.~\eqref{eq:4}) and several constraints, including a stochastic dominance constraint.

To solve the optimization problem using Newton's method, we construct the following Lagrangian function, incorporating both the objective function and the constraints from the problem~\eqref{eq:50}.
The Lagrangian is
\begin{align}\label{eq:47}
  L(x, q; \lambda, \mu, \nu_1, \dots, \nu_d) &\coloneqq q + \frac{1}{1 - \beta} \bigl(\mathbb{E} (-x^\top \xi - q)_+^p \bigr)^{\frac{1}{p}} \\
  &\qquad + \lambda \left(1 - \sum_{i=1}^d x_i \right) + \sum_{i=1}^d \mathbf{1}_{x_i \leq 0} \, \nu_i \cdot x_i \\
  &\qquad + \mu \cdot \overline{g}(x).
\end{align}
This formulation allows us to address the objective risk function (cf.\,\eqref{eq:4}), the simplex constraints on \( x \) and as well as the stochastic dominance constraint through the function \( \overline{g}(x^\top \xi) \).
The partial derivatives of the Lagrangian~\eqref{eq:47} are
\begin{align}
   \frac{\partial L}{\partial x}&= -\frac{1}{1-\beta}\bigl(\E(-x^\top\xi-q)_+^p\bigr)^{\frac1 p-1}\cdot  \E\left((-x^\top\xi-q)_+^{p-1}\cdot\xi\right)
     -\lambda\cdot\one + \nu\bullet \one_{x<0} \\
     & \quad + \mu\cdot\nabla \overline g(x), \label{eq:48}\\
  \frac{\partial L}{\partial q}&= 1- \frac1{1-\beta} \left(\E\bigl(-x^\top\xi-q\bigr)_+^p\right)^{\frac1 p-1} \cdot \E\bigl(-x^\top\xi-q\bigr)_+^{p-1}, \\
   \frac{\partial L}{\partial\lambda}&=  1-\sum_{i=1}^d x_i, \\
  \frac{\partial L}{\partial\mu}&=  \overline g(x),\label{eq:315}
                  \shortintertext{and}
  \frac{\partial L}{\partial\nu}&=  x\, \one_{x<0},\label{eq:55}
\end{align}
where $a\bullet b$ is the elementwise (Hadamard) product and
$\one_{x<0}= \left(\begin{cases}1 & \text{if }x_i<0\\ 0 & \text{else}\end{cases} \right)_{i=1,\dots,n}$.
It follows from $\frac{\partial L}{\partial\nu}=0$ in~\eqref{eq:55} that $x\ge0$ so that the term $\nu\bullet\one_{x\le0}=0$ for feasible $x$ in~\eqref{eq:48} with $x\ge0$, which represents complementary slackness.
The function~$F(x, q, \lambda, \mu, \nu) \coloneqq \left( \frac{\partial L}{\partial x}, \frac{\partial L}{\partial q}, \frac{\partial L}{\partial \lambda}, \frac{\partial L}{\partial \mu}, \frac{\partial L}{\partial \nu} \right)$ requires evaluating~$\overline g(x)$ in~\eqref{eq:315} and its derivative $\nabla\overline g(x)$ in~\eqref{eq:48}, which~\eqref{eq:57} provides.

We extend the derivatives by~\eqref{eq:56} and the function to be solved by the parameter~$t$.
With this, the first order conditions can be restated as
\begin{align}
   0&= -\frac{1}{1-\beta}\bigl(\E(-x^\top\xi-q)_+^p\bigr)^{\frac1 p-1}\cdot  \E\left((-x^\top\xi-q)_+^{p-1}\cdot\xi\right)
     -\lambda\cdot\one + \nu\bullet \one_{x<0}\\  & \quad+ \mu\cdot\overline g_x(x,t)\label{eq:59}\\
  0&= 1- \frac1{1-\beta} \left(\E\bigl(-x^\top\xi-q\bigr)_+^p\right)^{\frac1 p-1} \cdot \E\bigl(-x^\top\xi-q\bigr)_+^{p-1}\\
   0&=  1-\sum_{i=1}^d x_i, \\
  0&=  x\, \one_{x<0},
  \shortintertext{and}
  0&=  g_p(t,x),\text{ cf.\ \eqref{eq:315} and \eqref{eq:58}, and}\\
  0&= g_{p-1}(t,x),\text{ cf.\ \eqref{eq:56}},
\end{align}
where $\overline g_x(x,t)$ in~\eqref{eq:59} is given by
\[  \overline g_x(x,t)\coloneqq -\frac{\partial g_p}{\partial t}\bigl(t,x\bigr)\frac{\nabla_x g_{p-1}\bigl(t,x\bigr)}{\frac{\partial g_{p-1}}{\partial t}\bigl(t,x\bigr)} +\frac{\partial g_p}{\partial x}\bigl(t,x\bigr),\]
cf.~\eqref{eq:57}.
\begin{remark}[Boundaries of $t$]
  To confirm that a candidate \( x \) is truly optimal, we need to ensure that \( g_p(t, x^\top \xi) \leq 0 \) for all \( t \in \mathbb{R} \).
  However, based on results from \citet{dentcheva2003optimization}, it is sufficient to check this non-positivity condition over the interval \( t \in [\essinf \xi_0, \esssup \xi_0]  \), where $\xi_0$ is the benchmark variable.
  This interval effectively captures the critical values of \( t \) where violations may occur, thus eliminating the need to evaluate \( g_p(t, x^\top \xi) \) over all of \( \mathbb{R} \).
  Despite this reduction, the interval \( [\essinf \xi_0, \esssup \xi_0] \) still contains infinitely many points, which poses a challenge in practice.
\end{remark}

\paragraph{Verifications.}
In practice, there are cases where \( g_p(t, x^\top \xi) \) becomes positive in certain localized regions for specific values of \( t \) (see Figure~\ref{fig:FunG2}).
To tackle this issue, in addition to solving the function \( F(x, q, \lambda, \mu, \nu) \) to find its zero, we introduce a constraint. 
This new constraint addresses cases where \( g_p(t, x^\top \xi) \) becomes positive in certain regions.
The constraint focuses on controlling the maximum value of \( g_p(t, x^\top \xi) \) over any interval where positivity occurs.
To formalize this, we define an indicator function \( \hat{g}_p(x, \hat{t}) \) as follows
\[
\hat{g}_p(x, \hat{t}) \coloneqq
\begin{cases}
    g_p(x, \hat{t}), & \text{if } g_p(x, \hat{t}) > 0, \\
    0, & \text{if } g_p(x, \hat{t}) \leq 0,
\end{cases}
\]
where \( \hat{t} = \arg\max_{t \in [p_a, p_b]} g_p(t, x^\top \xi) \) and \( [p_a, p_b] \subset [\essinf \xi_0, \esssup \xi_0] \) represents the subset of \( t \) values where \( g_p(t, x^\top \xi) \) is positive.

By enforcing the constraint \( \hat{g}_p(x, \hat{t}) = 0 \) through iterative refinement, we ensure that \( g_p(t, x^\top \xi) \) does not exceed zero in any region \( [\essinf \xi_0, \esssup \xi_0] \) where it would otherwise be positive. 
This ensures non-positivity across the critical interval \( [\essinf \xi_0, \esssup \xi_0] \) and compliance with stochastic dominance constraints.



\begin{algorithm}[!thb]
	\scriptsize
  \KwIn{Initialize \( x^{(0)} \), \( t^{(0)} \) \( q^{(0)} \), and multipliers \(\lambda^{(0)}, \mu^{(0)}, \nu^{(0)} \), tolerance \(\epsilon\), and interval \( [\min \xi_0, \max \xi_0] \)}
	Set iteration counter \( k = 0 \)\\

		\While{\emph{not converged}}{
			Compute the Jacobian matrix \( J \) by taking the derivative of \( F(x, q, \lambda, \mu, \nu) \) with respect to \( (x, q, \lambda, \mu, \nu) \)
			\[
			J = \nabla F(x, q, \lambda, \mu, \nu)
			\]
      Use the implicit function \(\nabla t(x)\) (see \eqref{eq:57}) to calculate \(\overline{g}_x(x,t)\) 

			\If{\(\|F(x, q, \lambda, \mu, \nu)\| < \epsilon\)}{
				\textbf{break} the loop as convergence is achieved
			}
			
			Compute the Newton step $\Delta$ by solving the system \(J\cdot \Delta = -F(x, q, \lambda, \mu, \nu) \)\\
			Update variables:
			\[
			x^{(k+1)} = x^{(k)} + \Delta_x, \quad q^{(k+1)} = q^{(k)} + \Delta_q, \quad t^{(k+1)} = t^{(k)} + \Delta_t
			\]
			Update multipliers:
			\[
			\lambda^{(k+1)} = \lambda^{(k)} + \Delta_\lambda, \quad \mu^{(k+1)} = \mu^{(k)} + \Delta_\mu, \quad \nu^{(k+1)} = \nu^{(k)} + \Delta_\nu
			\]
			Increment \( k \leftarrow k + 1 \)

	}
	\caption{Newton's method for stochastic dominance optimization with Risk function (cf.\ref{def:141}) \label{alg:Newton}}
	\KwOut{Optimal values of \( x \), \( q \), and multipliers \( \lambda \), \( \mu \), \( \nu \)}
\end{algorithm}
\paragraph{Other Objectives.}
Note that Algorithm~\ref{alg:Newton} computes not only the optimal \( x \), but also the values of \( q \) and \( t \), along with the multipliers \( \lambda \), \( \mu \), and \( \nu \).
Our proposed algorithm achieves this in a single, unified optimization process. Additionally, Algorithm~\ref{alg:Newton} is flexible and can be adapted to other objective functions.

For instance, consider maximizing the expected return
\[\max_{x} \,\mathbb{E} \, x^{\top} \xi  \]
subject to higher order stochastic dominance constraints.
In this case, the Lagrangian is modified, and the partial derivatives become
\begin{align}
  \frac{\partial L}{\partial x} &= \mathbb{E}\,\xi  - \lambda \cdot\one + \nu \bullet\one_{x < 0} + \mu \cdot \nabla \overline{g}(x), \label{eq:828} \\
 \frac{\partial L}{\partial \lambda} &= 1 - \sum_{i=1}^d x_i, \\
 \frac{\partial L}{\partial \mu} &= \overline{g}(x), \shortintertext{and} 
 \frac{\partial L}{\partial \nu} &= x \, \one_{x < 0}.
\end{align}
This modified formulation allows us to solve for the optimal values while adhering to the same constraints, thereby extending the algorithm’s applicability to a broader range of objectives.

\section{Numerical exposition\label{sec:numerics}}
This section presents the numerical exposition of our method and Algorithm~\ref{alg:Newton}, which solves the higher order stochastic dominance problem.
We provide all datasets and the Julia code used in the experiments to ensure transparency and allow others to replicate our results.
The corresponding implementations are available in the following GitHub repository
\begin{center}
  \url{https://github.com/rajmadan96/HigerOrderStochasticDominance.git}\,.
  \end{center}
  

\subsection{Higher order stochastic dominance}
This section exposes higher-order stochastic dominance using our proposed method.

\subsubsection{Mainstream big data}

\begin{figure}[!htb]
	\centering
	\includegraphics[width=0.6\textwidth]{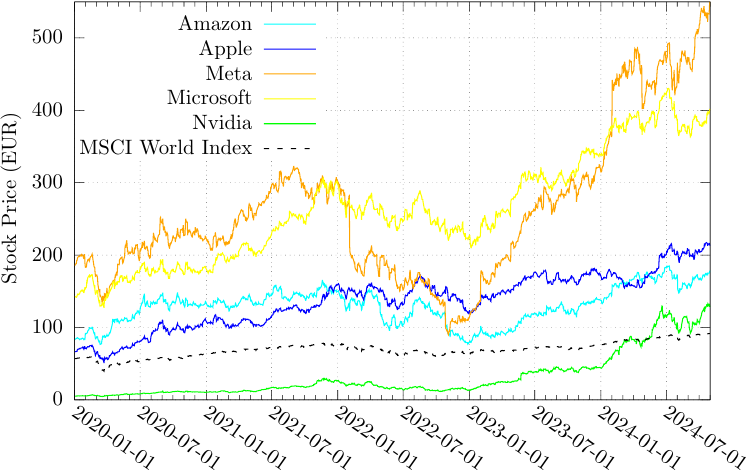}
	\caption{MSCI World index and 5 big tech stock prices: Jan 2020 – Oct 2024}\label{fig:BigTech}
\end{figure}

This section analyzes portfolio optimization using five leading tech companies.
Namely, Amazon, Apple, Meta (formerly Facebook), Microsoft, and Nvidia.
The chosen portfolio holds the largest allocation within the MSCI World Index.
The dataset covers the period from January 1, 2020, to October 31, 2024, with performance measured at two-month intervals.
This period includes the global COVID-19 pandemic and its aftermath, a time of significant market volatility and economic shifts, making it an insightful timeframe for analysis.
The analysis includes a total of 28 scenarios across 5 assets.
For the analysis, the MSCI World Index serves as the benchmark. We obtained price data using the \enquote*{YFinance} package in Julia and transformed these prices into returns to facilitate our study.
\begin{figure}[!h]
  \centering
  \begin{minipage}{\textwidth}
  \begin{tikzpicture}
    \begin{axis}[
        width=\textwidth,
        height=6cm,
        xlabel={stochastic order},
        ylabel={portfolio return ($\%$)},
        grid=major,
        xtick={1,2,3,4,5,6,7},
        xticklabels={$2^{\text{nd}}$, $3^{\text{rd}}$, $9^{\text{th}}$, $10^{\text{th}}$, $15^{\text{th}}$, $20^{\text{th}}$,  $\infty$},
        ymin=0.00, ymax=90.00, 
        legend style={at={(1.05,0.5)}, anchor=west},
        scale=0.7
    ]
  
    \addplot[color=blue, mark=o] coordinates {
        (1,  34.565934398103806) 
        (2,  37.11540649716656) 
        (3, 42.282366387901796) 
        (4, 45.439942114052535) 
        (5, 51.1867950685384) 
        (6, 64.26113258958645) 
        (7, 84.30496696443277) 
    };
    \addlegendentry{Objective}
    \addplot[color=red, mark=+] coordinates {
        (1, 10.389229614786167) 
        (2, 10.389229614786167) 
        (3, 10.389229614786167) 
        (4, 10.389229614786167) 
        (5, 10.389229614786167) 
        (6, 10.389229614786167) 
        (7, 10.389229614786167) 
    };
    \addlegendentry{MSCI World}
      \end{axis}
  \end{tikzpicture}
  \end{minipage}
  
  \begin{minipage}{\textwidth}
    \begin{tikzpicture}
      \begin{axis}[
          width=\textwidth,
          height=6cm,
          xlabel={stochastic order},
          ylabel={portfolio allocation (\%)},
          grid=major,
          xtick={1,2,3,4,5,6,7,8},
          xticklabels={$2^{\text{nd}}$, $3^{\text{rd}}$, $9^{\text{th}}$, $10^{\text{th}}$, $15^{\text{th}}$, $20^{\text{th}}$,  $\infty$},
          legend style={at={(1.05,0.5)}, anchor=west, legend columns=1},
          ymin=0, ymax=110,
          scale=0.7
      ]
    
      \addplot[fill=orange, opacity=0.3,draw=orange] coordinates {
        (1, 100.0)
        (2, 100.0)
        (3, 100.0)
        (4, 100.0)
        (5, 100.0)
        (6, 100)
        (7, 100.0)    
      }\closedcycle;
      \addlegendentry{Nvidia}
      \addplot[fill=red, opacity=0.3, draw=red] coordinates {
        (1, 78.83774957080088)
        (2, 74.64433096201148)
        (3, 65.0)
        (4, 60.0)
        (5, 50.0)
        (6, 30.0)
        (7, 0.0012)  
        }\closedcycle;
      \addlegendentry{Apple}

      \addplot[fill=green, opacity=0.3,draw=green] coordinates {
          (1, 41.20433507883062)
          (2, 35.35717102128317)
          (3, 35.0)
          (4, 30.0)
          (5, 30.0)
          (6, 10.0)
          (7, 0.0)
      }\closedcycle;
      \addlegendentry{Microsoft}

      \addplot[fill=blue, opacity=0.3,draw=blue] coordinates {
        (1, 3.1020155763874406)
        (2, 4.8745280147823085)
        (3, 7.677214416294423)
        (4, 5.0)
        (5, 5.0)
        (6, 5.0)
        (7, 0.0)
      }\closedcycle;
      \addlegendentry{Meta}
    
      \addplot[fill=purple, opacity=0.3,draw=purple] coordinates {
        (1, 0)
        (2, 0)
        (3, 2.322785583705577)
        (4, 0)
        (5, 0)
        (6, 0.0)
        (7, 0.0)
      }\closedcycle;
      \addlegendentry{Amazon}
    \end{axis}
    \end{tikzpicture}
 \end{minipage}

  \caption{Maximized portfolio (annualized) return (top) and cumulative portfolio allocation by stochastic order (bottom)\label{fig:HigherOrderBigData}}
\end{figure}
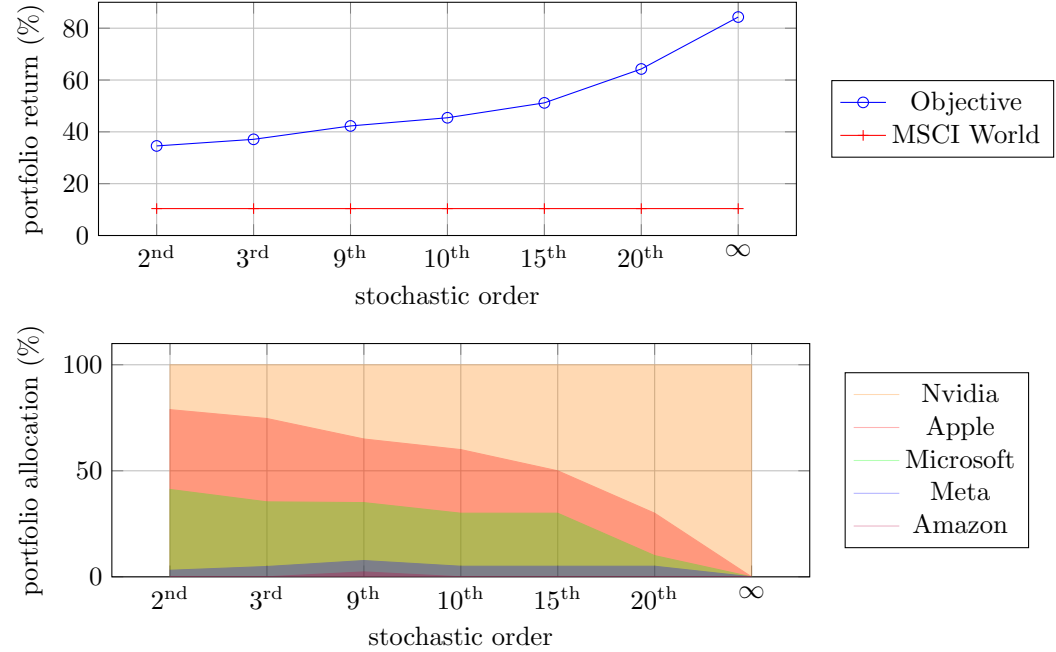

Figure~\ref{fig:HigherOrderBigData} presents outcomes of our experiment.\footnote{We convert bimonthly returns into annualized returns} 
In the top plot, the portfolio's annualized return (marked as “Objective”) consistently increases as we move to higher orders, reaching around 80$\%$ at the final point.
In the bottom plot, we observe the portfolio allocation for each asset at various orders.
Initially, Nvidia and Apple have smaller allocations in the portfolio, but their allocation grows significantly as the order increases.
By the highest orders, Nvidia and Apple dominate the portfolio, suggesting they play a larger role in driving returns at these levels.
In contrast, Microsoft, Meta, and Amazon start with higher allocations in the lower orders, but their contributions steadily decrease, becoming minimal by the highest orders.
This distribution indicates that as the stochastic order increases, the portfolio relies more heavily on Nvidia and Apple to achieve higher returns.

\subsubsection{Analysis using standardized dataset}
For this analysis, we use the dataset provided by the \enquote*{Fama $\&$ French Data Library}.\footnote{\url{http://mba.tuck.dartmouth.edu/pages/faculty/ken.french/data_library.html}}
See Appendix~\ref{sec:data} for more details. 
The portfolio returns, \(\xi \in \mathbb{R}^{22 \times 5}\), is a matrix, where the 22 rows represent the scenarios and the 5 columns represent the assets.
We construct the benchmark returns, \(\xi_0\), by averaging the returns across the five sectors
\[
\xi_0 = \sum_{j=1}^{5} \xi_{ij} \tau_j \quad \text{for }  i=1,\dots,22,
\]
where each sector weight, \(\tau_j = \frac{1}{5}\), is equally distributed.
For this experiment, we again focus on simple objective function, \( \max_{x} \frac{1}{n}\sum_{i=1}^{n} \sum_{j=1}^{d} \xi_{ij} x_j \).
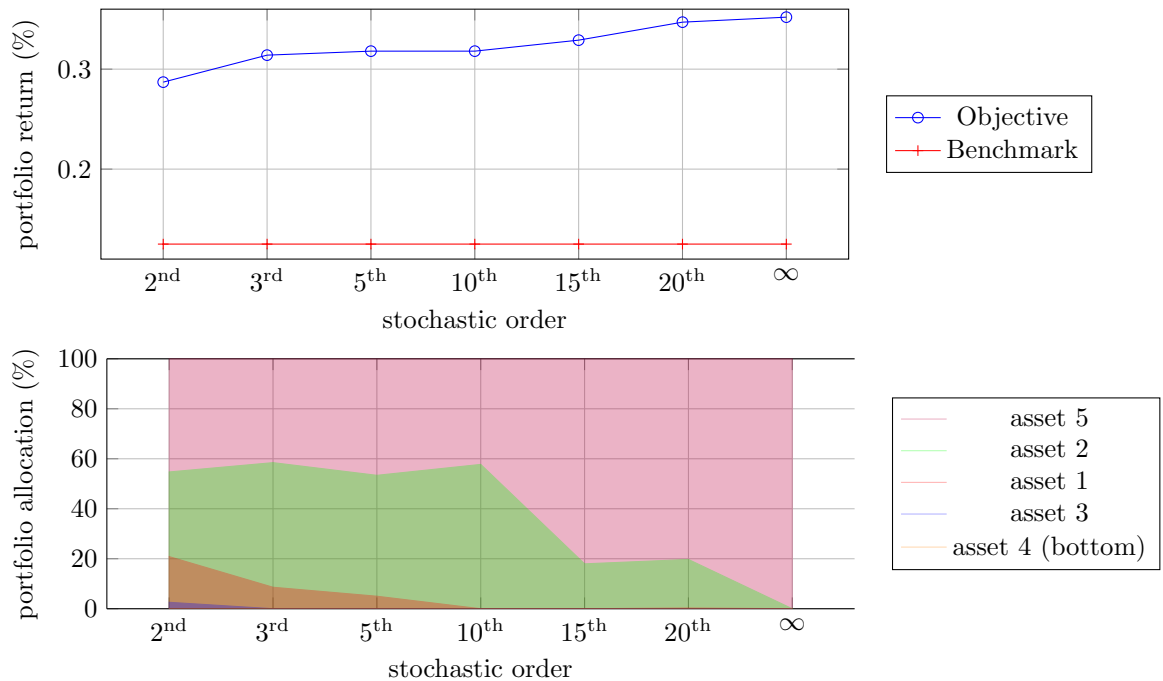
\begin{figure}[ht]
  \centering
  
  \begin{minipage}{\textwidth}
    \begin{tikzpicture}
      \begin{axis}[
          width=\textwidth,
          height=6cm,
          xlabel={stochastic order},
          ylabel={portfolio return ($\%$)},
          grid=major,
          xtick={1,2,3,4,5,6,7},
          xticklabels={$2^{\text{nd}}$, $3^{\text{rd}}$, $5^{\text{th}}$, $10^{\text{th}}$, $15^{\text{th}}$, $20^{\text{th}}$,$\infty$},
          ymin=0.11, ymax=0.36, 
          legend style={at={(1.05,0.5)}, anchor=west},
          scale=0.75
      ]
    
      \addplot[color=blue, mark=o] coordinates {
          (1, 0.287) 
          (2, 0.314) 
          (3, 0.318) 
          (4, 0.318) 
          (5, 0.329) 
          (6, 0.347) 
          (7, 0.352) 
      };
      \addlegendentry{Objective}
      \addplot[color=red, mark=+] coordinates {
          (1, 0.125) 
          (2, 0.125) 
          (3, 0.125) 
          (4, 0.125) 
          (5, 0.125) 
          (6, 0.125) 
          (7, 0.125) 
      };
      \addlegendentry{Benchmark}
    
      \end{axis}
    \end{tikzpicture}
  \end{minipage}
  

  \begin{minipage}{\textwidth}
    \centering
    \begin{tikzpicture}
      \begin{axis}[
          width=\textwidth,
          height=6cm,
          xlabel={stochastic order},
          ylabel={portfolio allocation (\%)},
          grid=major,
          xtick={1,2,3,4,5,6,7},
          xticklabels={$2^{\text{nd}}$, $3^{\text{rd}}$, $5^{\text{th}}$, $10^{\text{th}}$, $15^{\text{th}}$, $20^{\text{th}}$,$\infty$},
          legend style={at={(1.05,0.5)}, anchor=west, legend columns=1}, 
          scale=0.75,
          ymin=0, ymax=100, 
          axis y line*=left,
      ]
    
      \addplot[fill=purple, opacity=0.3, draw=purple] coordinates {
          (1, 100)
          (2, 100)
          (3, 100)
          (4, 100)
          (5, 100)
          (6, 100)
          (7, 100)
      } \closedcycle;
      \addlegendentry{{asset 5}}
    
      \addplot[fill=green, opacity=0.3, draw=green] coordinates {
          (1, 54.73)
          (2, 58.46)
          (3, 53.39)
          (4, 57.75)
          (5, 17.97)
          (6, 19.72)
          (7, 0)
      } \closedcycle;
      \addlegendentry{{asset 2}}
    
      \addplot[fill=red, opacity=0.3, draw=red] coordinates {
          (1, 20.86)
          (2, 8.60)
          (3, 5.01)
          (4, 0.00)
          (5, 0.00)
          (6, 0.30)
          (7, 0)
      } \closedcycle;
      \addlegendentry{{asset 1}}
    
      \addplot[fill=blue, opacity=0.3, draw=blue] coordinates {
          (1, 2.58)
          (2, 0.00)
          (3, 0.00)
          (4, 0.00)
          (5, 0.00)
          (6, 0.00)
          (7, 0)
      } \closedcycle;
      \addlegendentry{{asset 3}}
    
      \addplot[fill=orange, opacity=0.3, draw=orange] coordinates {
          (1, 0)
          (2, 0)
          (3, 0)
          (4, 0)
          (5, 0)
          (6, 0)
          (7, 0)
      } \closedcycle;
      \addlegendentry{{asset 4 (bottom)}}
    
      \end{axis}
    \end{tikzpicture}
  \end{minipage}

  \caption{Maximized portfolio return (top) and cumulative portfolio allocation by order (bottom)\label{fig:HigherOrder}}
\end{figure}
Figure~\ref{fig:HigherOrder} illustrates how maximized portfolio returns and asset allocations evolve across different stochastic dominance orders, from the  $2^{\text{nd}}$ order up to infinity.
The maximized portfolio return by order graph shows a clear upward trend.
The portfolio return starts at around 0.30$\%$ for the $2^{\text{nd}}$ order and gradually increases to approximately 0.35$\%$ as the stochastic order  reaches infinity.
This trend suggests that higher-order stochastic dominance strategies lead to better portfolio returns.
The portfolio allocation by order provides a breakdown of how allocations to different sectors change with increasing order.
We observe that
\begin{itemize}[nolistsep]
\item asset 1 and asset 3 maintain small and relatively stable allocations across orders.
\item The allocation of asset 2 increase at first but then gradually declines as the order increases.
\item Asset 5 starts with a high allocation and becomes dominant as the order approaches infinity, receiving nearly 100$\%$ of the allocation.
\item Asset 4 consistently receives no allocation across all orders.
\end{itemize}
This combination of charts shows that as the dominance order increases, the portfolio becomes more concentrated in \emph{asset 5}, leading to higher returns at higher orders.
This suggests a shift from diversification toward focusing on the highest-performing asset as the order increases.

\paragraph{Risk function (cf.\,Definition \ref{def:141}).}
Next, we proceed with the same experimental setup but apply a different objective, the risk function (see, Definition~\ref{def:141}), specifically focusing on the case \( p = 1 \) (second-order cases).
Table~\ref{tab:Risksecond} presents the second-order stochastic dominance results for different values of \( \beta \). 

As \( \beta \) increases, we see notable shifts in the allocations.
For asset 1 and asset 5, higher~\( \beta \) values result in reduced allocations, while asset 2 sees a substantial increase at \( \beta = 0.8 \).
This change suggests that as we raise \( \beta \), the focus shifts toward sectors like asset 2 and asset 4 to satisfy risk preferences. 
The risk function \( \mathcal{R}_{\beta} \) also grows significantly as \( \beta \) increases, indicating a higher tolerance for risk across these scenarios.

\begin{table}[!htb]
  \footnotesize
  \centering
  \begin{tabular}{cccccccc}
  \hline
  $\beta$ & \textbf{asset 1} & \textbf{asset 2} & \textbf{asset 3} & \textbf{asset 4} & \textbf{asset 5}& Risk Function $\mathcal R_{\beta} $ cf.\,\eqref{def:141} \\ \hline
  0.1   & 0.2108 &  0.1855 & 0.3852 & 0.0589 & 0.1491 & 0.0742\\
  0.5   & 0.1920 & 0.1224 & 0.2128 & 0.0834 & 0.3639 &0.6645 \\
  0.8 & 0.1442 & 0.3965 & 0.1758 & 0.1955 & 0.0788 & 1.1211 \\
  \hline
  \end{tabular}
  \caption{Second order stochastic dominance for different $\beta$ values \label{tab:Risksecond}}
\end{table}

\subsection{Performance comparison}
Prominent numerical algorithms are available for second and third order stochastic dominance in the literature.
We compare the numerical solutions of these well-established approaches and the number of stochastic dominance constraints required with our proposed methods.
This comparison study is conducted to demonstrate the consistency of our approach.

\subsubsection{Second-order stochastic dominance}
First, we focus on the second-order stochastic dominance methods introduced by  \citet{dentcheva2003optimization} and \citet{kuosmanen2004efficient}.

When \( p = 1 \), the functions \( g_{\xi_0}(t) = \mathbb{E}(t - \xi_0)_+ \) and \( g_{x^{\top}\xi}(t) = \mathbb{E}(t - x^{\top}\xi)_+ \) may have discontinuous derivatives, which complicates the standard proof as explained in Remark~\ref{rem:167}.
However, verifying the inequality 
\[
\mathbb{E}(t - \xi_0)_+ \geq \mathbb{E}(t - x^{\top}\xi)_+ \quad \text{for all } t \in \mathbb{R},
\]
only requires checking specific values of \( t \).
These values occur where 
\[
\mathbb{P}(\xi_0 \leq t) = \mathbb{P}(x^{\top}\xi \leq t),
\]
where \( \mathbb{P}(\xi_0 \leq t) = \mathbb{E}[\mathbf{1}_{\{t > \xi_0\}}] \) and \( \mathbb{P}(x^{\top}\xi \leq t) = \mathbb{E}[\mathbf{1}_{\{t > x^{\top}\xi\}}] \). Theorem~\ref{thm:156} confirms this result for the case \( p = 1 \), as the presence of well-defined jumps in the functions compensates for the lack of continuity in their derivatives, as detailed in Remark~\ref{rem:225}. 

We base our analysis on a dataset from \citet[Section~8]{dentcheva2003optimization}, which includes eight assets spanning a 22-year period. 
A detailed dataset and descriptive statistics can be found in Appendix~\ref{sec:data}.
The portfolio return \(\xi \in \mathbb{R}^{22 \times 8}\) is a matrix, where the 22 rows represent the years (scenarios) and the 8 columns represent the assets.
We construct the benchmark returns,~\(\xi_0\), by averaging the returns across the eight assets:
\[
\xi_0 = \sum_{j=1}^{8} \xi_{ij} \tau_j \quad \text{for }  i=1,\dots,22,
\]
where each asset weight, \(\tau_j = \frac{1}{8}\), is equally distributed.
Additionally, we use a simple objective function \begin{equation}\label{eq:695} \max_{x} \frac{1}{n}\sum_{i=1}^{n} \sum_{j=1}^{d} \xi_{ij} x_j. \end{equation} 

\begin{table}[!h]
  \scriptsize
  \centering
  \begin{tabular}{lccc}
    \toprule
    \textbf{asset} & \textbf{Our approach}& \citet{dentcheva2003optimization} & \citet{kuosmanen2004efficient}\\ \midrule
    1 & 0.0& 0.0& 0.0 \\
    2 & 0.0 & 0.0 & 0.0\\
    3 & 0.0680& 0.0680 & 0.0680 \\
    4 & 0.1880 & 0.1880 & 0.1880\\
    5 & 0.0& 0.0& 0.0\\
    6 & 0.3913 & 0.3913 & 0.3913 \\
    7 &  0.2309 & 0.2309 & 0.2309\\
    8 & 0.1216 & 0.1216 & 0.1216\\ \midrule
    \multicolumn{1}{c}\textbf{objective}&11.00 $\%$&11.00$\%$ & 11.00$\%$ \\ \midrule
    \multicolumn{1}{c}\textbf{number of SSD constraints}&\textbf{3}& 506 & 550 \\ \bottomrule
 \end{tabular}
 \caption{Second order stochastic dominance (case $p=1$): a comparison with prominent approaches\label{tab:CompareProminent}}
\end{table}
Table~\ref{tab:CompareProminent} compares our approach with the algorithms of \citet{dentcheva2003optimization} and \citet{kuosmanen2004efficient} in terms of optimal portfolio allocations, objective values, and the number of SSD constraints.
Our proposed method achieves the same portfolio weights and reaches the optimal objective value effectively.
However, in this case,  our approach only needs~\emph{3~SSD constraints}. 
In general, \citet{dentcheva2003optimization} algorithm requires \( n^2 + n \) SSD constraints, and \citet{kuosmanen2004efficient} algorithm requires \( n^2 + 3n \) SSD constraints.
This means that their methods become much more complex as the number of scenarios (\( n \)) grows.
In contrast, our approach keeps the number of constraints significantly low, making it faster and more suitable for large-scale problems.
This significant reduction in constraints shows that our method can solve similar problems with less computational effort.


\begin{figure}[ht]
  \centering
  \includegraphics[width=0.5\textwidth]{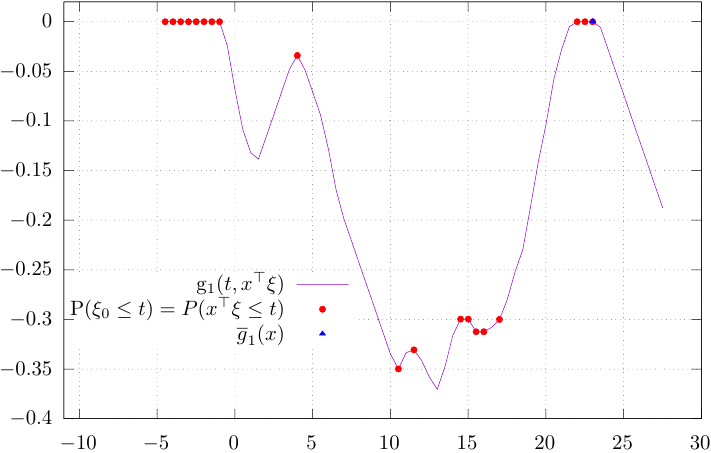}
  \caption{Stochastic dominance function \( g_1(t, x^\top \xi) \) for case $p=1$}
  \label{fig:p=1}
  \vspace{1em} 
\end{figure}

Figure~\ref{fig:p=1} shows that the stochastic dominance function \( g_1(t, x^\top \xi) \) (purple line, see equation~\eqref{eq:531}) remains in the non-positive region, confirming second-order stochastic dominance.
The red dots represent points where \(\mathbb{P}(\xi_0 \leq t) = \mathbb{P}(x^\top \xi \leq t)\), indicating alignment in cumulative probabilities.

According to our theoretical framework, verifying stochastic dominance requires only checking points that satisfy equation~\eqref{eq:37}.
Our algorithm goes a step further by calculating the specific point \( \overline{g}_1(x) \) (blue triangle), which conclusively confirms the dominance relation.

\subsubsection{Third-order stochastic dominance}
We next consider the third-order stochastic dominance (TSD) method by \citet{ThirdKopa}.
This method uses an advanced portfolio optimization framework to approximate strict TSD conditions.
It breaks the portfolio return space into threshold levels and expresses semivariance and expected shortfall as piecewise-quadratic functions.
\begin{table}[!h]
  \scriptsize
  \centering
  \begin{tabular}{lccc}
    \toprule
    \textbf{asset} & \textbf{Our approach}& \citet{ThirdKopa}\\ \midrule
    1 & 0.0& 0.0\\
    2 & 0.0 & 0.0 \\
    3 & 0.2549& 0.2621 \\
    4 & 0.0022 & 0.0 \\
    5 & 0.0& 0.0\\
    6 & 0.3763 & 0.4081 \\
    7 &  0.2485 & 0.2403 \\
    8 &0.1180 & 0.0892 \\ \midrule
    \multicolumn{1}{c}\textbf{objective}&11.03 $\%$&11.02$\%$ \\ \midrule
    \multicolumn{1}{c}\textbf{number of TSD constraints}&\textbf{3}& 508\\ \bottomrule
 \end{tabular}
 \caption{Third order stochastic dominance (case $p=2$): a comparison with \citet{ThirdKopa}\label{tab:CompareProminent2}}
\end{table}
We use the same experimental setup as in the previous section.
Table~\ref{tab:CompareProminent2} compares our TSD method with the approach by \citet{ThirdKopa}.
The results indicate that our solution aligns closely with the solutions of \citet{ThirdKopa}.
Both approaches assign almost similar weights to the critical assets, such as 3, 6, 7, and 8, reflecting a consistent understanding of the portfolio optimization problem under TSD conditions.
For example, our method assigns a weight of 0.3763 to asset 6, compared to 0.4081 by \citet{ThirdKopa}.
However, our approach achieves this alignment with significantly fewer constraints—just three, compared to 508 in \citet{ThirdKopa}.
This highlights the computational efficiency of our method while maintaining comparable precision in capturing the essential characteristics of the TSD conditions.
\section{Conclusion}
In summary, our work provides a new characterization of stochastic dominance, presenting two formulations that enable handling stochastic dominance constraints of higher orders, including non-integral order.
Further, the results simplify the verification by reducing the required test points to a finite set.

We also introduce a practical optimization framework that incorporates non-linear stochastic dominance constraints.
Finally, we validate our approach with a numerical study that demonstrates the reliability of Algorithm~\ref{alg:Newton} for solving higher-order stochastic dominance problems.

Future research could focus on extending the algorithm to encompass a wider range of stochastic dominance constraints and exploring its application in more advanced scenarios.
For example, one could consider settings involving multiple objective or multiple dominance constraints (cf.\ \citet{gutjahr2016stochastic}, \citet[Section~6]{dentcheva2003optimization}).
Future work may include a theoretical discussion on determining the exact number of constraints required to ensure consistency.

\section{Acknowledgement}
The authors thank the Euro Summer Institute 2024 in Ischia for its valuable insights and discussions that contributed to this research.

\bibliographystyle{abbrvnat}
\bibliography{LiteraturAlois.bib,LiteraturRaj.bib}
\appendix
\newpage
\section{Dataset and descriptive statistics \label{sec:data}}
\begin{table}[!htbp]
  \tiny
  \centering
  \caption{Returns and descriptive statistics for selected assets over 22 years (cf.\ \citet{dentcheva2003optimization})}
  \begin{tabular}{ccccccccc}
  \toprule
  \textbf{Year/Metric} & \textbf{asset 1} & \textbf{asset 2} & \textbf{asset 3} & \textbf{asset 4} & \textbf{asset 5} & \textbf{asset 6} & \textbf{asset 7} & \textbf{asset 8} \\
  \midrule
  \multicolumn{9}{l}{\textit{Annual Returns}} \\
  1  & $7.5$  & $-5.8$ & $-14.8$ & $-18.5$ & $-30.2$ & $2.3$  & $-14.9$ & $67.7$ \\
  1  & $8.4$  & $2$    & $-26.5$ & $-28.4$ & $-33.8$ & $0.2$  & $-23.2$ & $72.2$ \\
  3  & $6.1$  & $5.6$  & $37.1$  & $38.5$  & $31.8$  & $12.3$ & $35.4$  & $-24$  \\
  4  & $5.2$  & $17.5$ & $23.6$  & $26.6$  & $28$    & $15.6$ & $2.5$   & $-4$   \\
  5  & $5.5$  & $0.2$  & $-7.4$  & $-2.6$  & $9.3$   & $3$    & $18.1$  & $20$   \\
  6  & $7.7$  & $-1.8$ & $6.4$   & $9.3$   & $14.6$  & $1.2$  & $32.6$  & $29.5$ \\
  7  & $10.9$ & $-2.2$ & $18.4$  & $25.6$  & $30.7$  & $2.3$  & $4.8$   & $21.2$ \\
  8  & $12.7$ & $-5.3$ & $32.3$  & $33.7$  & $36.7$  & $3.1$  & $22.6$  & $29.6$ \\
  9  & $15.6$ & $0.3$  & $-5.1$  & $-3.7$  & $-1$    & $7.3$  & $-2.3$  & $-31.2$\\
  10 & $11.7$ & $46.5$ & $21.5$  & $18.7$  & $21.3$  & $31.1$ & $-1.9$  & $8.4$  \\
  11 & $9.2$  & $-1.5$ & $22.4$  & $23.5$  & $21.7$  & $8$    & $23.7$  & $-12.8$\\
  12 & $10.3$ & $15.9$ & $6.1$   & $3$     & $-9.7$  & $15$   & $7.4$   & $-17.5$\\
  13 & $8$    & $36.6$ & $31.6$  & $32.6$  & $33.3$  & $21.3$ & $56.2$  & $0.6$  \\
  14 & $6.3$  & $30.9$ & $18.6$  & $16.1$  & $8.6$   & $15.6$ & $69.4$  & $21.6$ \\
  15 & $6.1$  & $-7.5$ & $5.2$   & $2.3$   & $-4.1$  & $2.3$  & $24.6$  & $24.4$ \\
  16 & $7.1$  & $8.6$  & $16.5$  & $17.9$  & $16.5$  & $7.6$  & $28.3$  & $-13.9$\\
  17 & $8.7$  & $21.2$ & $31.6$  & $29.2$  & $20.4$  & $14.2$ & $10.5$  & $-2.3$ \\
  18 & $8$    & $5.4$  & $-3.2$  & $-6.2$  & $-17$   & $8.3$  & $-23.4$ & $-7.8$ \\
  19 & $5.7$  & $19.3$ & $30.4$  & $34.2$  & $59.4$  & $16.1$ & $12.1$  & $-4.2$ \\
  20 & $3.6$  & $7.9$  & $7.6$   & $9$     & $17.4$  & $7.6$  & $-12.2$ & $-7.4$ \\
  21 & $3.1$  & $21.7$ & $10$    & $11.3$  & $16.2$  & $11$   & $32.6$  & $14.6$ \\
  22 & $4.5$  & $-11.1$& $1.2$   & $-0.1$  & $-3.2$  & $-3.5$ & $7.8$   & $-1$   \\
  \midrule
  \multicolumn{9}{l}{\textit{Descriptive Statistics}} \\
  Mean          & $7.81$   & $9.29$  & $11.97$  & $12.36$ & $12.13$ & $9.17$ & $14.12$ &  $8.35$  \\
  Median        & $7.6$   & $5.5$   & $13.25$ & $13.7$   & $16.35$ & $7.8$ &  $11.3$ &  $-0.2$   \\
  Std. Dev.     & $3.04$   & $15.21$   & $16.81$   & $17.86$   & $22.36$ & $8.05$ & $23.54$ &  $26.34$   \\
  Variance      & $9.27$   & $231.52$   & $282.82$  & $319.13$   & $500.06$ & $64.89$ & $554.22$ & $693.92$  \\
  Skewness      & $0.73$  & $0.85$   & $-0.45$  & $-0.48$  & $-0.29$ &  $0.82$ &  $0.41$ &  $0.93$ \\
  Kurtosis      & $0.27$   & $-0.05$  & $-0.48$   & $-0.45$   & $-0.07$ & $0.62$ & $-0.03$ &  $0.54$   \\
  Minimum       & $3.1$  & $-11.1$  & $-26.5$  & $-28.4$  & $-33.8$ & $-3.5$ & $-23.4$ & $-31.2$ \\
  Maximum       & $15.6$    & $46.5$    & $37.1$    & $38.5$    & $59.4$ & $31.1$ &  $69.4$ &    $72.2$   \\
  Percentile 25 & $5.8$ & $-1.72$    &  $2.2$  & $0.5$  & $-2.65$  &    $2.47$ & $-0.8$ &  $-7.7$  \\
  Percentile 75 & $9.07$   & $18.85$     & $23.3$   & $26.35$      &  $26.42$ &  $14.8$ & $27.37$ & $21.5$   \\
  \bottomrule
  \end{tabular}
  \end{table}

\begin{table}[!htbp]
  \tiny
  \centering
  \caption{Daily returns and descriptive statistics for selected portfolios}
  \begin{tabular}{ccccccc}
  \toprule
  \textbf{Date/Metric} & \textbf{asset 1} & \textbf{asset 2} & \textbf{asset 3} & \textbf{asset 4} & \textbf{asset 5} \\
  \midrule
  \multicolumn{6}{l}{\textit{Daily Returns}} \\
  2024-07-01 & $-1.01$ & $-0.72$ & $1.10$ & $-1.80$ & $-0.65$ \\
  2024-07-02 & $-2.50$ & $-0.22$ & $-0.86$ & $-0.09$ & $0.64$ \\
  2024-07-03 & $-0.38$ & $0.27$ & $-0.21$ & $0.41$ & $0.41$ \\
  2024-07-05 & $-1.11$ & $0.15$ & $-0.33$ & $2.10$ & $-1.61$ \\
  2024-07-08 & $0.44$ & $-0.22$ & $-0.64$ & $0.59$ & $0.33$ \\
  2024-07-09 & $0.05$ & $-1.49$ & $1.52$ & $-2.59$ & $-0.26$ \\
  2024-07-10 & $-0.34$ & $0.79$ & $1.31$ & $2.75$ & $1.93$ \\
  2024-07-11 & $4.00$ & $1.60$ & $2.53$ & $0.73$ & $3.72$ \\
  2024-07-12 & $1.76$ & $1.04$ & $-0.06$ & $0.41$ & $2.17$ \\
  2024-07-15 & $1.53$ & $0.02$ & $-3.42$ & $-2.50$ & $-0.63$ \\
  2024-07-16 & $2.77$ & $1.28$ & $2.59$ & $0.40$ & $1.83$ \\
  2024-07-17 & $0.71$ & $0.69$ & $-1.44$ & $1.12$ & $0.94$ \\
  2024-07-18 & $-2.24$ & $-1.24$ & $-1.74$ & $-1.72$ & $-1.78$ \\
  2024-07-19 & $0.08$ & $-1.51$ & $-0.23$ & $-2.67$ & $-1.81$ \\
  2024-07-22 & $0.35$ & $0.29$ & $-0.54$ & $-0.42$ & $-0.25$ \\
  2024-07-23 & $2.55$ & $-0.09$ & $0.08$ & $-0.39$ & $1.14$ \\
  2024-07-24 & $-2.21$ & $2.88$ & $-1.61$ & $-2.51$ & $-0.41$ \\
  2024-07-25 & $1.67$ & $-0.21$ & $1.05$ & $0.00$ & $2.21$ \\
  2024-07-26 & $0.17$ & $1.26$ & $2.29$ & $0.93$ & $2.13$ \\
  2024-07-29 & $-0.97$ & $-0.79$ & $-0.48$ & $-0.48$ & $-1.13$ \\
  2024-07-30 & $-0.11$ & $0.79$ & $0.98$ & $-1.65$ & $0.07$ \\
  2024-07-31 & $0.24$ & $1.89$ & $0.72$ & $-1.14$ & $-1.23$ \\
  \midrule
  \multicolumn{6}{l}{\textit{Descriptive Statistics}} \\
  Mean          & $0.25$   & $0.29$   & $0.12$   & $-0.39$  & $0.35$   \\
  Median        & $0.12$   & $0.21$   & $-0.13$  & $-0.24$  & $0.20$   \\
  Std Dev       & $1.66$   & $1.11$   & $1.50$   & $1.53$   & $1.51$   \\
  Variance      & $2.74$   & $1.24$   & $2.25$   & $2.35$   & $2.29$   \\
  Skewness      & $0.34$   & $0.29$   & $-0.22$  & $0.09$  & $0.37$  \\
  Kurtosis      & $-0.22$   & $-0.23$   & $-0.16$   & $-0.75$   & $-0.68$   \\
  Minimum       & $-2.50$  & $-1.51$  & $-3.42$  & $-2.67$  & $-1.81$  \\
  Maximum       & $4.00$   & $2.88$   & $2.59$   & $2.75$   & $3.72$   \\
  Percentile 25 & $-0.82$  & $-0.22$  & $-0.61$  & $-1.70$  & $-0.64$  \\
  Percentile 75 & $1.32$   & $0.97$   & $1.08$   & $0.54$   & $1.65$   \\
  \bottomrule
  \end{tabular}
\end{table}

\end{document}